\documentclass{birkjour}
%
%
%
 \newtheorem{thm}{Theorem}[section]
 
 \newtheorem{lem}[thm]{Lemma}
 \newtheorem{prop}[thm]{Proposition}
 
 \newtheorem{rem}[thm]{Remark}
 \newtheorem{nota}[thm]{Notation}
 \newtheorem{App}[thm]{Application}
 
 \numberwithin{equation}{section}

\begin{document}

%
%
%
%
%
%
%
%
%

\author[Youssef Aserrar  and  Elhoucien Elqorachi]{Youssef Aserrar  and  Elhoucien Elqorachi}

\address{%
	Ibn Zohr University, Faculty of sciences, 
Department of mathematics,\\
Agadir,
Morocco}

\email{youssefaserrar05@gmail.com, elqorachi@hotmail.com }

\subjclass{39B52, 39B32}

\keywords{Functional equation, semigroup, addition law, automorphism.}

\date{January 1, 2020}
\title[Cosine and Sine addition and subtraction law with an automorphism]{Cosine and Sine addition and subtraction law with  an automorphism}
 
\begin{abstract}
Let $S$ be a semigroup. Our main  results is that we describe the complex-valued solutions of the following functional equations 
\[g(x\sigma (y)) = g(x)g(y)+f(x)f(y),\  x,y\in S,\]
\[f(x\sigma (y)) = f(x)g(y)+f(y)g(x),\  x,y\in S,\] 
and 
 \[f(x\sigma (y)) = f(x)g(y)-f(y)g(x),\  x,y\in S,\] 
where $\sigma :S\rightarrow S$ is an automorphism that need not be involutive. As a consequence we show that the first two equations are equivalent to their variants. We also give some applications.
\end{abstract}

\maketitle

\section{Introduction}
Let $S$ be a semigroup and $\sigma :S\rightarrow S$ an automorphism, i.e  $\sigma (xy)=\sigma(x)\sigma(y)$ for all $x,y\in S$. The cosine subtraction formula, sine addition formula and the sine subtraction formula are respectively the functional equations
\begin{equation}
g(x\sigma (y)) = g(x)g(y)+f(x)f(y),\quad x,y\in S,
\label{E3}
\end{equation}
\begin{equation}
f(x\sigma (y)) = f(x)g(y)+f(y)g(x),\quad x,y\in S,
\label{E1}
\end{equation}
\begin{equation}
f(x\sigma (y)) = f(x)g(y)-f(y)g(x),\quad x,y\in S,
\label{E2}
\end{equation}
for unknown functions $f,g:S\rightarrow\mathbb{C}$. These functional equations generalizes respectively the trigonometric identities
\[\cos \left(x-y \right)=\cos (x)\cos (y)+\sin (x)\sin (y),\quad x,y\in \mathbb{R}, \]
\[\sin \left(x+y \right)=\sin (x)\cos (y)+\sin (y)\cos (x),\quad x,y\in \mathbb{R}, \]
\[\sin \left(x-y \right)=\sin (x)\cos (y)-\sin (y)\cos (x),\quad x,y\in \mathbb{R}, \]
and have been investigated by many authors. In the case of an involutive automorphism $\sigma$ , i.e an automorphism satisfying $\sigma(\sigma(x))=x$ for all $x\in S$, equation \eqref{E3} was solved on abelian groups by Vincze \cite{V}, and on general groups by Chung et al. \cite{Ch}. Poulsen and Stetk\ae r \cite{Pou} described the continuous solutions of \eqref{E3}, \eqref{E1} and \eqref{E2} on topological groups. In \cite{Ajb1,Ajb2} Ajebbar and Elqorachi obtained the solutions of \eqref{E3}, \eqref{E1} and \eqref{E2} on semigroups generated by their squares. The solutions of \eqref{E1} with $\sigma=id$ are also described in \cite[Theorem 3.1]{EB2} on a semigroup not necessarily generated by its squares. In \cite{EB1}, Ebanks solved \eqref{E3} and \eqref{E2} on general monoids. Recently the authors \cite{Ase1,Ase2} solved \eqref{E3}, \eqref{E1} and \eqref{E2} on semigroups. We also refer to \cite[Section 3.2.3]{Acz1}, \cite[Chapter 13]{Acz2} and \cite[Chapter 4]{ST1} for further contextual and historical discussions.\par 
The purpose of this paper is to solve the functional equations \eqref{E3}, \eqref{E1} and \eqref{E2} on a semigroup $S$, where $\sigma :S\rightarrow S$ is an automorphism not necessarily involutive, our results are natural extensions of previous results about the solutions of \eqref{E3}, \eqref{E1} and \eqref{E2}.\par 
The character involutive of the automorphism $\sigma$ is used in the proofs of \cite[Theorem 4.12, Theorem 4.16]{ST1}, \cite[Theorem 4.1, Corollary 4.3]{EB1} and \cite[Theorem 4.2, Theorem 5.1]{Ase2}. The present paper shows that this condition is not crucial for the giving proofs even in the setting of semigroups.\par 
The contributions of the present work to the knowledge about solutions of
\eqref{E3}, \eqref{E1} and \eqref{E2} are the following:\\
(1) By the help of Theorem \ref{phi1} and Theorem \ref{phi2} (See section 2) we find all complex-valued solutions of \eqref{E3}, \eqref{E1} and \eqref{E2} on a semigroup $S$ in terms of multiplicative functions and solutions $\phi$ of the special sine addition law
\begin{equation}
\phi(xy)=\phi(x)\chi(y)+\phi(y)\chi(x),\ \ x,y\in S,
\label{spsine}
\end{equation} 
where $\chi:S\rightarrow\mathbb{C}$ is a multiplicative function. The new result here is that the automorphism $\sigma$ is not involutive. That makes the exposition more involved and explains why our proofs are longer than those of previous papers about the same functional equations.\\
(2) We consider the variants  of \eqref{E3} and \eqref{E1} respectively
\begin{equation}
g(\sigma(y)x)=g(x)g(y)+f(x)f(y),\quad x,y\in S,
\label{var1}
\end{equation}
\begin{equation}
f(\sigma(y)x)=f(x)g(y)+f(y)g(x),\quad x,y\in S.
\label{var2}
\end{equation}
We show that \eqref{var1} is equivalent to \eqref{E3}, and \eqref{var2} is equivalent to \eqref{E1}.\\
(3) As an application, we determine the complex-valued solutions of the new functional equations
\[g(x+\beta y)=g(x)g(y)+f(x)f(y),\quad x,y\in \mathbb{R},\]
\[f(x+\beta y)=f(x)g(y)+f(y)g(x),\quad x,y\in \mathbb{R},\] 
where $\beta \in \mathbb{R}\backslash \lbrace 0,-1,1\rbrace$. Obviously these equations generalizes the
functional equations
\[g(x\pm y)=g(x)g(y)+f(x)f(y),\quad x,y\in \mathbb{R},\]
\[f(x\pm y)=f(x)g(y)+f(y)g(x),\quad x,y\in \mathbb{R},\]
which have been studied by many authors (See for example \cite[Corollary 3.56.a, Corollary 3.56.c]{K} and \cite[Corollary 4.17]{ST1}).\par 
The outline of the paper is as follows: In the next section we give some notations and terminology. The complete solution of \eqref{E3} is given in section 3. In section 4 we solve the functional equation \eqref{E1}. The sine subtraction formula \eqref{E2} is solved in section 5. Section 6 contains some applications.
\section{Notations and terminology}
In this section we give some notations and notions that are essential in our discussion. Throughout this paper $S$ denotes a semigroup. That is a set equipped with an associative binary operation. A  multiplicative function on $S$ is a function $\mu :S\rightarrow \mathbb{C}$ satisfying $\mu(xy) = \mu(x)\mu(y)$ for all $x, y \in S$. A function $f:S\rightarrow \mathbb{C}$ is central if $f(xy) = f(yx)$ for all $x, y\in S$, and $f$ is abelian if $f$ is central and $f(xyz)=f(xzy)$ for all $x,y,z\in S$. We define the set $S^2:=\lbrace xy\  \vert \  x, y\in S\rbrace$. Let $\sigma: S \rightarrow S$ be an automorphism, for any function $f :S \rightarrow \mathbb{C}$ we define the function $f^*:=f\circ \sigma$. A topological semigroup is a pair, consisting of a semigroup $S$ and a topology on $S$, such that the product $(x,y)\mapsto xy$ is continuous from $S\times S$ to $S$, when $S$ is given the product topology. If $S$ is a topological semigroup, let $C(S)$ denote the set of continuous functions mapping $S$ into $\mathbb{C}$.
  \begin{nota}
  Let $\chi$ be a non-zero multiplicative function. The sympbol $\phi_{\chi}$ shall denote a solution of the sepcial sine addition law \eqref{spsine}. i.e
  $$\phi_{\chi}(xy)=\phi_{\chi}(x)\chi(y)+\phi_{\chi}(y)\chi(x),\ \ x,y\in S.$$
  \label{Nota1}
  \end{nota}
  The following are respectively  \cite[Theorem 3.2]{EB1} and \cite[Theorem 3.1]{EB2}, but for brivety some formulas of solution are expressed with the use of Notation \ref{Nota1}.
  \begin{thm}
  The solutions $g,f:S\rightarrow\mathbb{C}$ of the functional equation 
  $$g(xy)=g(x)g(y)-f(x)f(y),\ \ x,y\in S,$$
  are the following pairs:
  \item[(1)] $g=f=0$.
  \item[(2)] $g=\dfrac{\delta ^{-1}\chi_1+\delta \chi_2}{\delta^{-1}+\delta}$ and $f=\dfrac{\chi_1-\chi_2}{i\left(\delta^{-1}+\delta \right) }$, where $\chi_1,\chi_2:S\rightarrow\mathbb{C}$ are two different multiplicative functions and $\delta \in \mathbb{C}\backslash \left\lbrace 0,i,-i \right\rbrace $.
  \item[(3)] $f$ is any non-zero function such that $f=0$ on $S^2$ and $g=\pm f$.
  \item[(4)] $g=\chi\pm \phi_{\chi}$ and $f=\phi_{\chi}$, where $\chi:S\rightarrow\mathbb{C}$ is a non-zero multiplicative function.
  \label{phi1}
  \end{thm}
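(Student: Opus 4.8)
The plan is to handle the degenerate configurations first and then reduce the general case to the multiplicativity of two linear combinations of $f$ and $g$. Since the right-hand side $g(x)g(y)-f(x)f(y)$ is symmetric in $x$ and $y$, one immediately gets that $g$ is central. If $f=0$ the equation says exactly that $g$ is multiplicative, which is case (4) with $\phi_{\chi}=0$ (or case (1) when $g=0$); so from now on I assume $f\neq 0$.

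The heart of the argument is to manufacture, out of the single cosine-type identity, a companion sine-type identity for $f$. Expanding $g(xyz)$ in the two ways $g((xy)z)$ and $g(x(yz))$ and applying the equation three times, the terms $g(x)g(y)g(z)$ cancel and I am left with $f(x)\big[f(yz)-f(y)g(z)\big]=f(z)\big[f(xy)-g(x)f(y)\big]$ for all $x,y,z\in S$. Fixing an element $z_{0}$ (resp. $x_{0}$) with $f(z_{0})\neq0$ (resp. $f(x_{0})\neq0$) and dividing, this yields two representations $f(xy)=g(x)f(y)+f(x)k(y)$ and $f(xy)=f(x)g(y)+l(x)f(y)$ for suitable functions $k,l$. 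Equating them gives $f(y)[g(x)-l(x)]=f(x)[g(y)-k(y)]$, and separating variables (legitimate since $f\not\equiv0$) produces a constant $c$ with $k=l=g-cf$. Substituting back yields the key identity
\[f(xy)=f(x)g(y)+g(x)f(y)-c\,f(x)f(y),\qquad x,y\in S.\]
I expect this derivation — in particular justifying $k=l=g-cf$ at every point, including where $f$ vanishes — to be the main obstacle, since it is precisely what compensates for the absence of a second functional equation in the hypotheses.

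With the cosine identity and this sine-type identity both in hand, a direct computation shows that for a scalar $\alpha$ the combination $m:=g+\alpha f$ is multiplicative exactly when $\alpha$ is a root of $t^{2}+ct+1=0$; the two roots $\alpha_{1},\alpha_{2}$ therefore satisfy $\alpha_{1}\alpha_{2}=1$. If $\alpha_{1}\neq\alpha_{2}$ (that is, $c^{2}\neq4$), then $\chi_{1}:=g+\alpha_{1}f$ and $\chi_{2}:=g+\alpha_{2}f$ are two different multiplicative functions — distinct because $\chi_{1}-\chi_{2}=(\alpha_{1}-\alpha_{2})f\neq0$ — and inverting the linear system writes $g$ and $f$ as combinations of $\chi_{1},\chi_{2}$. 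A change of parameter $\alpha_{1}=i\delta$ (so $\alpha_{2}=-i\delta^{-1}$, using $\alpha_{1}\alpha_{2}=1$) recasts these combinations into the exact shape of case (2), the excluded values $\delta\in\{0,i,-i\}$ corresponding precisely to the forbidden coincidence $\alpha_{1}=\alpha_{2}$. Here I note that permitting one of the $\chi_{j}$ to be the zero multiplicative function is what absorbs the solutions in which $f$ and $g$ are proportional.

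Finally, if $\alpha_{1}=\alpha_{2}$ (so $c=\mp2$ and $\alpha=\pm1$), only the single combination $\chi:=g\pm f$ is available; but substituting $g=\chi\mp f$ together with the corresponding value of $c$ into the key identity collapses it to $f(xy)=f(x)\chi(y)+f(y)\chi(x)$, which is exactly the special sine addition law \eqref{spsine}. Hence $f=\phi_{\chi}$ in the sense of Notation \ref{Nota1} and $g=\chi\mp\phi_{\chi}$, which is case (4). When the multiplicative function $\chi$ so produced is identically zero, this same identity forces $f=0$ on $S^{2}$ while leaving $f$ free elsewhere and $g=\pm f$, which is case (3). Collecting the cases yields the stated list.
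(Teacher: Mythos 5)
The paper does not actually prove this statement: Theorem \ref{phi1} is imported verbatim from Ebanks \cite[Theorem 3.2]{EB1} and used as a black box, so there is no internal proof to compare yours against. Judged on its own, your argument is correct and follows the standard route for cosine-type addition laws on semigroups. The associativity computation does yield $f(x)\left[f(yz)-f(y)g(z)\right]=f(z)\left[f(xy)-g(x)f(y)\right]$; the separation-of-variables step you flag as the main obstacle is in fact airtight (fixing $y_1$ with $f(y_1)\neq 0$ gives $g-l=cf$ everywhere, and substituting back and cancelling $f(x)$ at a point where it is non-zero gives $g-k=cf$ everywhere, with the same constant); and the resulting identity $f(xy)=f(x)g(y)+g(x)f(y)-c\,f(x)f(y)$ makes $g+\alpha f$ multiplicative precisely for the roots of $t^{2}+ct+1$. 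Your bookkeeping of the reparametrisation $\alpha_1=i\delta$, of the double-root case (which collapses to the special sine addition law and hence to cases (3) and (4) according to whether $\chi=0$ or not), and of the proportional solutions being absorbed into case (2), all reproduces the stated list. Two small points to tighten: the theorem asserts that the listed pairs are \emph{exactly} the solutions, so you should also record the routine verification that each listed form satisfies the equation; and you should say explicitly that the zero function is admitted as a multiplicative function in case (2) (the statement says only ``two different multiplicative functions''), since that convention is what legitimises absorbing the solutions with $f$ proportional to $g$ there.
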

  \begin{thm}
  The solutions $g,f:S\rightarrow\mathbb{C}$ of the functional equation 
  $$f(xy)=f(x)g(y)+f(y)g(x),\ \ x,y\in S,$$
  with $f\neq 0$ can be listed as follows:
  \item[(1)] $f=c\left( \chi_1-\chi_2\right) $ and $g=\dfrac{\chi_1+\chi_2}{2}$, where $\chi_1,\chi_2:S\rightarrow\mathbb{C}$ are two different multiplicative functions and $c\in \mathbb{C}\backslash \left\lbrace 0 \right\rbrace $.
  \item[(2)] $f$ is any non-zero function such that $f=0$ on $S^2$ and $g=0$.
  \item[(3)] $f=\phi_{\chi}$ and $g=\chi$, where $\chi:S\rightarrow\mathbb{C}$ is a non-zero multiplicative function.
  \label{phi2}
  \end{thm}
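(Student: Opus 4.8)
The plan is to treat the equation as a quadratic relation linking $f$ and $g$ and to reduce it to multiplicative functions. First I would record the immediate observation that the right-hand side of $f(xy)=f(x)g(y)+f(y)g(x)$ is symmetric in $x$ and $y$, so $f$ is central, and then extract the key structural identity from associativity. Expanding $f((xy)z)$ and $f(x(yz))$ by the equation and equating them, the terms $f(y)g(x)g(z)$ cancel and one is left with
\[f(x)\bigl[g(yz)-g(y)g(z)\bigr]=f(z)\bigl[g(xy)-g(x)g(y)\bigr].\]
Setting $h(x,y):=g(xy)-g(x)g(y)$, this reads $f(x)h(y,z)=f(z)h(x,y)$ for all $x,y,z\in S$, which is the engine of the whole argument.

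Next I would split into two cases according to whether $g$ is multiplicative. If $h\equiv 0$, then $g=:\chi$ is multiplicative. If $\chi=0$ the equation forces $f(xy)=0$ for all $x,y$, i.e. $f$ vanishes on $S^2$ while $f\neq 0$ and $g=0$; this is conclusion (2). If $\chi\neq 0$, then $f(xy)=f(x)\chi(y)+f(y)\chi(x)$ says precisely that $f=\phi_{\chi}$ in the sense of Notation \ref{Nota1} with $g=\chi$, which is conclusion (3).

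The substantial case is $h\not\equiv 0$. Since $f\neq 0$, I fix $x_0$ with $f(x_0)\neq 0$ and substitute successively $x=x_0$ and $z=x_0$ into $f(x)h(y,z)=f(z)h(x,y)$. The first substitution yields $h(x,y)=f(y)k(x)$ and the second yields $h(x,y)=f(x)\ell(y)$ for suitable functions $k,\ell$. Comparing these and evaluating at $x_0$ forces $k=\gamma f$ for a constant $\gamma$, whence $h(x,y)=\gamma f(x)f(y)$; because $h\not\equiv 0$ and $f\neq 0$ we get $\gamma\neq 0$. Thus $f$ and $g$ satisfy the coupled system
\[f(xy)=f(x)g(y)+f(y)g(x),\qquad g(xy)=g(x)g(y)+\gamma f(x)f(y).\]

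Finally I would diagonalise this system. Choosing $\alpha\neq 0$ with $\alpha^{2}=\gamma$ and computing $(g\pm\alpha f)(xy)$ from the two equations, one checks that $\chi_1:=g+\alpha f$ and $\chi_2:=g-\alpha f$ are both multiplicative. Inverting gives $g=\tfrac{\chi_1+\chi_2}{2}$ and $f=\tfrac{1}{2\alpha}(\chi_1-\chi_2)$, which is conclusion (1) with $c=\tfrac{1}{2\alpha}$; here $\chi_1\neq\chi_2$, for otherwise $\alpha f=0$ would contradict $f\neq 0$. I expect the main obstacle to be this middle step: proving that the defect $h$ has the rank-one form $\gamma f(x)f(y)$, since it is exactly what converts the single equation into a solvable quadratic system, and it requires care at the points where $f$ vanishes. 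The remaining computations are routine verifications.
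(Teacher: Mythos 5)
Your argument is correct and complete. Note that the paper does not actually prove this statement: it imports it verbatim as \cite[Theorem 3.1]{EB2}, so there is no internal proof to compare against. What you have reconstructed is essentially the standard proof of the sine addition law on semigroups (as in Ebanks and in Stetk\ae r's book): the associativity identity $f(x)\left[g(yz)-g(y)g(z)\right]=f(z)\left[g(xy)-g(x)g(y)\right]$, the rank-one conclusion $g(xy)-g(x)g(y)=\gamma f(x)f(y)$ obtained by specialising at a point where $f$ does not vanish, and the diagonalisation $\chi_{1,2}=g\pm\alpha f$ with $\alpha^2=\gamma$. The one place worth a remark is the subcase where $\gamma\neq 0$ but $f$ and $g$ are proportional: there $\chi_2$ (say) may be the zero function, and your reduction to case (1) still goes through only because the theorem does not require $\chi_1,\chi_2$ to be non-zero, merely distinct; it is worth saying this explicitly.
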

  The following lemma will be used throughout the paper without explicit mentionning.
\begin{lem}
Let $f:S\rightarrow \mathbb{C}$ be a non-zero function satisfying
\begin{equation}
f(x\sigma(y))=\beta f(x)f(y),\quad\text{for all}\quad x,y\in S,
\label{M1}
\end{equation} 
where $\beta \in \mathbb{C}\backslash \lbrace 0\rbrace$ is a constant. Then there exists a non-zero multiplicative function $\chi:S\rightarrow \mathbb{C}$ such that $\beta f =\chi$ and $\chi^*=\chi$.
\label{M}
\end{lem}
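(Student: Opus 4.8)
The plan is to introduce the scaled function $\chi := \beta f$ and to show that it is the multiplicative function we seek. Multiplying the hypothesis \eqref{M1} through by $\beta$ and using $\chi = \beta f$ turns the left side into $\chi(x\sigma(y))$ and the right side into $\bigl(\beta f(x)\bigr)\bigl(\beta f(y)\bigr) = \chi(x)\chi(y)$, so that \eqref{M1} takes the cleaner form
\[
\chi(x\sigma(y)) = \chi(x)\chi(y), \qquad x,y\in S.
\]
Since $f\neq 0$, also $\chi\neq 0$, so I may fix a point $x_0\in S$ with $\chi(x_0)\neq 0$; this point will be used repeatedly to cancel a factor of $\chi$.

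The first and main step is to prove that $\chi$ is multiplicative, and here I would compute $\chi\bigl(x\sigma(yz)\bigr)$ in two ways, exploiting that $\sigma$ is a homomorphism and that the operation on $S$ is associative. On one hand, applying the displayed identity with second variable $yz$ gives $\chi\bigl(x\sigma(yz)\bigr) = \chi(x)\chi(yz)$. On the other hand, writing $\sigma(yz)=\sigma(y)\sigma(z)$ and regrouping $x\sigma(yz) = \bigl(x\sigma(y)\bigr)\sigma(z)$, a further application of the displayed identity (with $x$ replaced by $x\sigma(y)$), followed by one more, yields $\chi\bigl(x\sigma(yz)\bigr) = \chi\bigl(x\sigma(y)\bigr)\chi(z) = \chi(x)\chi(y)\chi(z)$. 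Equating the two expressions gives $\chi(x)\chi(yz)=\chi(x)\chi(y)\chi(z)$ for all $x,y,z\in S$; setting $x=x_0$ and dividing by $\chi(x_0)\neq 0$ produces $\chi(yz)=\chi(y)\chi(z)$, so $\chi$ is a non-zero multiplicative function. It is worth noting that this computation uses neither involutivity nor even bijectivity of $\sigma$, only the identity $\sigma(yz)=\sigma(y)\sigma(z)$ together with associativity.

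Finally, to obtain $\chi^*=\chi$ I would feed the multiplicativity just established back into the displayed identity: since $\chi$ is multiplicative, $\chi(x\sigma(y)) = \chi(x)\chi(\sigma(y)) = \chi(x)\chi^*(y)$, whereas the identity itself reads $\chi(x\sigma(y)) = \chi(x)\chi(y)$. Comparing gives $\chi(x)\chi^*(y) = \chi(x)\chi(y)$ for all $x,y\in S$; evaluating at $x=x_0$ and cancelling $\chi(x_0)$ yields $\chi^*(y)=\chi(y)$ for every $y\in S$, that is $\chi^*=\chi$. Together with $\beta f = \chi$ this is exactly the assertion. The argument is short, and the only place calling for care is the bookkeeping in the associativity computation of the second step, which is precisely where the (harmless) absence of any involutivity assumption on $\sigma$ becomes visible.
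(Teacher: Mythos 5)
Your proposal is correct and follows essentially the same route as the paper: both compute the value at $x\sigma(y)\sigma(z)=x\sigma(yz)$ in two ways via associativity and the homomorphism property of $\sigma$, cancel a nonvanishing factor to get multiplicativity of $\beta f$, and then substitute back into \eqref{M1} to conclude $\chi^*=\chi$. The only difference is cosmetic (you rescale to $\chi=\beta f$ at the outset rather than at the end), and your explicit remark that neither involutivity nor bijectivity of $\sigma$ is needed matches the spirit of the paper.
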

\begin{proof}
By using the associativity of the semigroup operation we compute $f(x\sigma(y)\sigma(z))$ using the identity \eqref{M1} first as $f((x\sigma(y))\sigma(z))$ and then as $f(x(\sigma(y)\sigma(z)))$ and compare the results to obtain 
\begin{equation}
\beta^2 f(x)f(y)f(z)=\beta f(x)f(yz),\quad \text{for all}\quad x,y,z\in S.
\label{M2}
\end{equation}
Since $f\neq 0$ and $\beta \neq 0$ Eq. \eqref{M2} can be written as 
\begin{equation}
f(yz)=\beta f(y)f(z), \quad \text{for all}\quad y,z\in S.
\label{M3}
\end{equation}
This implies that the function $\chi:=\beta f$ is multiplicative. That is $f=\dfrac{1}{\beta}\chi$, and then Eq. \eqref{M1} becomes
\begin{equation}
\dfrac{1}{\beta}\chi(x)\chi^*(y)=\dfrac{1}{\beta}\chi(x)\chi(y).
\label{M4}
\end{equation}
Since $\chi\neq 0$ and $\beta \neq 0$, we deduce from \eqref{M4} that $\chi^*=\chi$. This completes the proof of Lemma \ref{M}.
\end{proof} 
\section{The cosine subtraction formula \eqref{E3}}
The most recent result on the cosine subtraction formula \eqref{E3}, namely
\[g(x\sigma(y)=g(x)g(y)+f(x)f(y),\quad x,y\in S,\]
on semigroups is \cite[Theorem 4.2]{Ase2}. By using similar computations to those of \cite[Theorem 4.2]{Ase2} we will solve \eqref{E3} on general semigroups, but here $\sigma$ is not assumed to be involutive. It should be mentioned that in the case of an involutive automorphism $\sigma$, the general solution of \eqref{E3} on monoids can be found in \cite[Theorem 4.1]{EB1}.
\begin{lem}
Let $f,g:S\rightarrow\mathbb{C}$ be a solution of Eq. \eqref{E3}, and suppose that $f$ and $g$ are linearly independent. Then $g^*=g$ and $f^*=f$ or $f^*=-f$.
\label{Le1}
\end{lem}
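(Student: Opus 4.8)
The plan is to extract, using only associativity and the linear independence of $f$ and $g$, a pair of ordinary ($\sigma$-free) addition formulas for $g$ and $f$, and then to compare these with \eqref{E3} itself to see how $\sigma$ acts. Throughout I write $g^{*}=g\circ\sigma$ and $f^{*}=f\circ\sigma$. The starting point is to compute $g(x\sigma(y)\sigma(z))$ in two ways: since $\sigma$ is an automorphism, $\sigma(y)\sigma(z)=\sigma(yz)$, so applying \eqref{E3} to $x\cdot\sigma(yz)$ and to $(x\sigma(y))\sigma(z)$ and equating gives
\[ g(x)[g(yz)-g(y)g(z)]+f(x)[f(yz)-f(y)g(z)]=f(x\sigma(y))f(z). \]
Fixing $z_{0}$ with $f(z_{0})\neq 0$ and using linear independence of $g,f$ to match the coefficients of $g(x)$ and $f(x)$, I obtain functions $P,Q:S\to\mathbb{C}$ with $f(x\sigma(y))=g(x)P(y)+f(x)Q(y)$ and, substituting back and separating $x$,
\[ g(yz)=g(y)g(z)+P(y)f(z),\qquad f(yz)=f(y)g(z)+Q(y)f(z). \]

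I would then feed these two formulas back into the associativity of $g(xyz)$ and $f(xyz)$. Computing each in two ways produces the relations $P(xy)=g(x)P(y)+P(x)Q(y)$ and $Q(xy)=f(x)P(y)+Q(x)Q(y)$. Independently, evaluating the first addition formula at the argument $\sigma(z)$ and comparing with \eqref{E3} gives $g(y)[g(z)-g^{*}(z)]+f(y)f(z)-P(y)f^{*}(z)=0$; since $f\neq 0$ forces $f^{*}\not\equiv 0$, the only way this can hold is that $P\in\mathrm{span}(g,f)$, say $P=\alpha g+\beta f$, and then linear independence yields $f=\beta f^{*}$ and $g^{*}=g-\alpha f^{*}$. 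An analogous comparison of $f(x\sigma(z))=g(x)P(z)+f(x)Q(z)$ with the second addition formula taken at $\sigma(z)$ shows $Q\in\mathrm{span}(g,f)$, say $Q=\gamma g+\delta f$.

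To finish, substitute $P=\alpha g+\beta f$ and $Q=\gamma g+\delta f$ into $P(xy)=g(x)P(y)+P(x)Q(y)$ and compare the coefficients of the four products $g(x)g(y)$, $g(x)f(y)$, $f(x)g(y)$, $f(x)f(y)$, which are linearly independent on $S\times S$ because $g,f$ are. This forces $\alpha=0$ and $\gamma=1$, hence $g^{*}=g-\alpha f^{*}=g$, while $P=\beta f$ and $Q=g+\delta f$. Writing $f^{*}=\mu f$ (the content of $f=\beta f^{*}$, so that $\beta\mu=1$), the comparison of $f(x\sigma(z))=g(x)P(z)+f(x)Q(z)$ with $f(x\sigma(z))=f(x)g^{*}(z)+Q(x)f^{*}(z)$ now collapses to $\beta=\mu$; together with $\beta\mu=1$ this gives $\mu^{2}=1$, i.e. $f^{*}=f$ or $f^{*}=-f$, completing the proof.

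The main obstacle is exactly the non-involutivity of $\sigma$. When $\sigma^{2}=\mathrm{id}$ one can symmetrize directly, but here every purely symmetric substitution collapses back to \eqref{E3} and yields no information, which is why one must route the argument through the triple-product identities and the auxiliary functions $P,Q$. The delicate steps are showing that $P$ and $Q$ are forced into $\mathrm{span}(g,f)$ rather than being genuinely new functions, and then keeping the bilinear bookkeeping straight when separating the products of $g$ and $f$; this is the source of the extra length compared with the involutive case.
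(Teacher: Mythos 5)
Your proof is correct. The opening is the same as the paper's: both compute $g(x\sigma(y)\sigma(z))$ in two ways, extract the representation $f(x\sigma(y))=g(x)P(y)+f(x)Q(y)$ together with the $\sigma$-free addition formulas $g(yz)=g(y)g(z)+P(y)f(z)$ and $f(yz)=f(y)g(z)+Q(y)f(z)$, and then compare the $g$-formula at $(y,\sigma(z))$ with \eqref{E3} to get $g^*=g-\alpha f^*$ and $f=\beta f^*$ (the paper's $c_1,c_2$). Where you genuinely diverge is in killing $\alpha$. The paper splits into the cases $c_1=0$ and $c_1\neq 0$, and in the second case produces two expressions for $f(x\sigma(y))$ whose comparison forces $g$ to be proportional to $f$, a contradiction. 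You instead derive the additional associativity relation $P(xy)=g(x)P(y)+P(x)Q(y)$ (which the paper never writes down), expand $P=\alpha g+\beta f$ and $Q=\gamma g+\delta f$, and compare coefficients of the four linearly independent products $g\otimes g$, $g\otimes f$, $f\otimes g$, $f\otimes f$; this yields $\alpha\beta=0$ and $\beta\gamma=\beta$ simultaneously, hence $\alpha=0$ and $\gamma=1$ with no case distinction. Your closing step ($\beta=\mu$ from the two expressions for $f(x\sigma(z))$, combined with $\beta\mu=1$, giving $\mu^2=1$) is a mild variant of the paper's computation of $c_2^2=1$ obtained by applying \eqref{E3} at $(\sigma(x),y)$ and then at $(x,\sigma(y))$. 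The only point to make explicit in a final write-up is that $\beta\neq 0$ (immediate from $f=\beta f^*$ and $f\neq 0$) must be in hand before you divide by $\beta$ in the coefficient comparison; with that noted, your route is a clean and slightly shorter alternative to the paper's case analysis.
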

\begin{proof}
By using the associativity of the semigroup operation we compute $g(x\sigma(y)\sigma(z))$ by the help of Eq. \eqref{E3} first as $g((x\sigma(y))\sigma(z))$ and then as $g(x(\sigma(y)\sigma(z)))$ and compare the results. We obtain after some rearrangement that
\begin{equation}
f(x)\left[f(yz)-f(y)g(z) \right]+g(x)\left[ g(yz)-g(y)g(z)\right]=f(z)f(x\sigma(y)).
\label{A4}  
\end{equation}
Since $f\neq 0$, there exists $z_0\in S$ such that $f(z_0)\neq 0$ and then 
\begin{equation}
f(x)h(y)+g(x)k(y)=f(x\sigma(y)),
\label{A5}
\end{equation}
where 
$$h(y)=\dfrac{f(yz_0)-f(y)g(z_0)}{f(z_0)},$$
and $$k(y)=\dfrac{g(yz_0)-g(y)g(z_0)}{f(z_0)}.$$
By using \eqref{E3} and the fact that $\sigma$ is a bijection, we obtain
\begin{equation}
k=c_1g+c_2f,
\label{A6}
\end{equation}
for some constants $c_1,c_2\in \mathbb{C}$. Substituting  \eqref{A5} into \eqref{A4}, we obtain 
\begin{align}
\begin{split}
f(x)\left[f(yz)-f(y)g(z) \right]+g(x)\left[ g(yz)-g(y)g(z)\right]\\= f(x)f(z)h(y)+g(x)f(z)k(y).
\label{A7}
\end{split}
\end{align}
Since $f$ and $g$ are linearly independent we deduce from \eqref{A7} that
\begin{equation}
g(yz)=g(y)g(z)+f(z)k(y),
\label{A8}
\end{equation}
and 
\begin{equation}
f(yz)=f(y)g(z)+f(z)h(y).
\label{A9}
\end{equation}
Substituting \eqref{A6} into \eqref{A8}, we get
\begin{equation}
g(yz)=\left[ g(z)+c_1f(z)\right] g(y)+c_2f(z)f(y).
\label{A10}
\end{equation}
So, by applying \eqref{A10} to the pair $(y,\sigma (z))$, we obtain 
\begin{equation}
g(y\sigma(z))=\left[ g^*(z)+c_1f^*(z)\right] g(y)+c_2f^*(z)f(y).
\label{A11}
\end{equation}
By comparing \eqref{A10} and \eqref{E3}, and using the linear independence of $f$ and $g$, we get 
\begin{equation}
g=g^*+c_1f^*,
\label{A12}
\end{equation}
\begin{equation}
f=c_2f^*.
\label{A13}
\end{equation}
Since $f\neq 0$, we deduce from \eqref{A13} that $c_2\neq 0$.\\
\underline{First case :} $c_1=0$. That is $g^*=g$.  By applying Eq. \eqref{E3} to the pair $(\sigma(x),y)$ we find that 
\[g^*(xy)=g^*(x)g(y)+f^*(x)f(y).\]
So since $f^*=\dfrac{1}{c_2}f$ and $g^*=g$, we get that for all $x,y\in S$
\begin{equation}
g(xy)=g(x)g(y)+\dfrac{1}{c_2}f(x)f(y).
\label{AA1}
\end{equation}
Now if we apply Eq. \eqref{AA1} to the pair $(x,\sigma(y))$, we get
\begin{equation}
g(x\sigma(y))=g(x)g(y)+\dfrac{1}{c_2^2}f(x)f(y).
\label{AA2}
\end{equation}
Comparing \eqref{E3} and \eqref{AA2} and using that $f\neq 0$, we deduce that $c_2^2=1$. So $c_2=\pm 1$, and then $f=f^*$ or $f=-f^*$.\\
\underline{Second case :} $c_1\neq 0$. We get from \eqref{A12} and \eqref{A13} that $g^*=g-\dfrac{c_1}{c_2}f$, and then by applying Eq. \eqref{E3} to the pair $(\sigma(x),y)$ we get
\begin{equation}
g(xy)-\dfrac{c_1}{c_2}f(xy)=g(x)g(y)-\dfrac{c_1}{c_2}f(x)g(y)+\dfrac{1}{c_2}f(x)f(y).
\label{AA3}
\end{equation}
Then by using Eq. \eqref{A10}, we get from \eqref{AA3} after some rearrangement that 
\begin{equation}
f(xy)=f(y)\left(c_2g(x)+\left( \dfrac{c_2^2-1}{c_1}\right) f(x) \right) +g(y)f(x).
\label{AA4}
\end{equation}
Comparing Eq. \eqref{A9} and Eq. \eqref{AA4}, and using that $f\neq 0$ we deduce that
\begin{equation}
h=c_2g+\left( \dfrac{c_2^2-1}{c_1}\right) f.
\label{AA5}
\end{equation} 
Taking \eqref{AA5} and \eqref{A6} into account Eq. \eqref{A5} becomes
\begin{equation}
f(x\sigma(y))=g(x)\left(c_2f(y)+c_1g(y) \right) +f(x)\left(\left( \dfrac{c_2^2-1}{c_1}\right) f(y)+c_2g(y) \right).
\label{AA6} 
\end{equation}
Now, if we apply Eq. \eqref{AA4} to the pair $(x,\sigma(y))$ we get
\begin{equation}
f(x\sigma(y))=f(x)\left(g(y)+\left( \dfrac{c_2^2-c_1^2-1}{c_1c_2}\right) f(y) \right) +g(x)f(y).
\label{AA7}
\end{equation}
Comparing Eq. \eqref{AA6} and Eq. \eqref{AA7} and using the linear independence of $f$ and $g$, we get 
$$g=\left( \dfrac{1-c_2}{c_1}\right) f.$$
This is a contradiction since $f$ and $g$ are linearly independent. So this case does not occur. This completes the proof of Lemma \ref{Le1}.
\end{proof}
\begin{rem}
The result of Lemma \ref{Le1} is also true for the variant \eqref{var1} of  equation \eqref{E3}. 
\begin{proof}
Let $f,g:S\rightarrow\mathbb{C}$ be a solution of Eq. \eqref{var1} such that $f$ and $g$ are linearly independent, if we compute $g(\sigma(yz)x)$ in two different ways we get by using the linear independent of $f$ and $g$ that 
\begin{equation}
f(\sigma(z)x)=f(x)h(z)+g(x)\left[a_1g(z)+a_2f(z) \right], 
\label{rev1}
\end{equation}
\begin{equation}
f(yz)=g(y)f(z)+f(y)h(z), 
\label{rev2}
\end{equation}
\begin{equation}
g(yz)=g(z)\left[g(y)+a_1f(y) \right]+a_2f(y)f(z) , 
\label{rev3}
\end{equation}
for some constants $a_1,a_2\in \mathbb{C}$ and $h$ is a function. If we apply Eq. \eqref{rev3} to $(\sigma(y),z)$ and compare the preceding equation with Eq. \eqref{var1} we obtain since $f$ and $g$ are linearly independent that $g=g^*+a_1f^*$ and $f=a_2f^*$. $f\neq 0$ implies that $a_2\neq 0$.\\
\underline{First case :} $a_1=0$. In this case $g^*=g$, so if we apply Eq. \eqref{var1} to $(\sigma(x),y)$ and then apply the preceding equation to $(x,\sigma(y))$ we obtain 
\begin{equation}
g(\sigma(y)x)=g(x)g(y)+\dfrac{1}{a_2^2}f(x)f(y).
\label{rev4}
\end{equation}
Comparing Eq. \eqref{rev4} with \eqref{var1} and using that $f\neq 0$  we deduce that $a_2^2=1$. That is $a_2=\pm 1$, so $f=f^*$ or $f=-f^*$.\\
\underline{Second case :} $a_1\neq 0$. If we apply Eq. \eqref{var1} to $(\sigma(x),y)$ and using Eq. \eqref{rev3} we find that 
\begin{equation}
f(yx)=f(y)\left(a_2g(x)+\left( \dfrac{a_2^2-1}{a_1}\right) f(x) \right) +g(y)f(x).
\label{rev5}
\end{equation}
Comparing Eq. \eqref{rev2} and Eq. \eqref{rev5} and using that $f\neq 0$, we get that $h=a_2g+\dfrac{a_2^2-1}{a_1}f$. Now Eq. \eqref{rev1} becomes
\begin{equation}
f(\sigma(y)x)=g(x)\left(a_2f(y)+a_1g(y) \right) +f(x)\left(\left( \dfrac{a_2^2-1}{a_1}\right) f(y)+c_2g(y) \right).
\label{rev6}
\end{equation}
By applying Eq. \eqref{rev5} to the pair $(x,\sigma(y))$, we find that 
\begin{equation}
f(\sigma(y)x)=f(x)\left(g(y)+\left( \dfrac{a_2^2-a_1^2-1}{a_1a_2}\right) f(y) \right) +g(x)f(y).
\label{rev7}
\end{equation}
Comparing these last two identities and using the linear independence of $f$ and $g$, we get $g=\dfrac{1-a_2}{a_1}f$ but this is a contradiction since $f$ and $g$ are linearly independent. This completes the proof of Remark \ref{RR1}.
\end{proof}

 \label{RR1}
\end{rem}
The next result gives the general solution of \eqref{E3} on semigroups.
\begin{thm}
The solutions $f,g:S\rightarrow\mathbb{C}$ of the functional equation \eqref{E3} are the following :
\item[(1)] $g=0$ and $f=0$.
\item[(2)] $g$ is any non-zero function such that $g=0$ on $S^2$, and $f=cg$, where $c\in \lbrace i,-i\rbrace$.
\item[(3)] $g=\dfrac{1}{1+\alpha^2}\chi$ and $f=\dfrac{\alpha}{1+\alpha^2}\chi$, where $\alpha \in \mathbb{C}\backslash \lbrace i,-i\rbrace$ is a constant and $\chi :S\rightarrow\mathbb{C}$ is a non-zero multiplicative function such that $\chi^*=\chi$.
\item[(4)] $g=\dfrac{\delta ^{-1}\chi_1+\delta \chi_2}{\delta^{-1}+\delta}$ and $f=\dfrac{\chi_2-\chi_1}{\delta^{-1}+\delta}$, where $\delta \in \mathbb{C}\backslash \lbrace 0,i,-i\rbrace$  and $\chi_1,\chi_2:S\rightarrow\mathbb{C}$ are two different multiplicative functions such that $\chi_1^* = \chi_1$ and $\chi_2^*=\chi_2$.
\item[(5)] $g=\dfrac{\chi+\chi^*}{2}$ and $f=\dfrac{\chi-\chi^*}{2i}$, where $\chi:S\rightarrow\mathbb{C}$ is a multiplicative function such that $\chi^*\neq \chi$ and $\chi\circ \sigma^2=\chi$.
\item[(6)] $f=-i\phi_{\chi}$ and $g=\chi \pm \phi_{\chi}$
where $\chi:S\rightarrow\mathbb{C}$ is a non-zero multiplicative function such that $\chi^*=\chi$ and $\phi_{\chi}^*=\phi_{\chi}$.\\ 
Note that $f$ and $g$ are Abelian in each case.\par 
Furthermore, if $S$ is a topological semigroup and $g\in C(S)$, then\\ $f,\chi,\chi^*,\chi_1,\chi_2,\phi_{\chi}\in C(S)$.
\label{TE3}
\end{thm}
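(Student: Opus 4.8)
The plan is to split the analysis according to whether $f$ and $g$ are linearly dependent or independent, and in the independent case to reduce \eqref{E3} to the $\sigma$-free cosine equation solved in Theorem \ref{phi1}. I would begin with the dependent case. If $g=0$ then \eqref{E3} forces $f(x)f(y)=0$, hence $f=0$, giving (1). If $g\neq 0$, dependence yields $f=cg$ for some $c\in\mathbb{C}$, and substitution into \eqref{E3} gives $g(x\sigma(y))=(1+c^2)g(x)g(y)$. When $1+c^2=0$, i.e. $c=\pm i$, the left side vanishes on all of $S^2$ (as $x\mapsto x\sigma(y)$ sweeps out $S^2$, $\sigma$ being surjective), giving (2); when $1+c^2\neq 0$, Lemma \ref{M} with $\beta=1+c^2$ produces a multiplicative $\chi$ with $\chi^*=\chi$ and $g=\tfrac{1}{1+c^2}\chi$, which is exactly (3) with $\alpha=c$.

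For the independent case I would invoke Lemma \ref{Le1} to get $g^*=g$ and $f^*=\pm f$, and from the first case of its proof (see \eqref{AA1}, where $1/c_2=c_2$) the $\sigma$-free identity $g(xy)=g(x)g(y)+c_2f(x)f(y)$ with $c_2\in\{1,-1\}$ and $f^*=c_2f$. If $c_2=-1$ this is precisely the equation of Theorem \ref{phi1}; if $c_2=1$ the substitution $F:=if$ turns it into that equation for $(g,F)$, with $F^*=F$. Hence Theorem \ref{phi1} describes $(g,f)$ up to this substitution, and since its families (1) and (3) force $f,g$ to be dependent or zero, only families (2) and (4) survive.

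The heart of the argument is then to impose $g^*=g$ and $f^*=c_2f$ on the surviving families; I expect this bookkeeping to be the main obstacle. In family (2) the pair $g,f$ and the pair $\chi_1,\chi_2$ span the same two-dimensional space (the transition matrix has nonzero determinant), so that span is $\sigma$-invariant; since the only multiplicative functions inside it are $\chi_1,\chi_2$ (a quick coefficient comparison shows a multiplicative $s\chi_1+t\chi_2$ must have $st=0$, $s,t\in\{0,1\}$), $\sigma$ permutes $\{\chi_1,\chi_2\}$. One checks that for $c_2=1$ the sign $f^*=f$ rules out the transposition, forcing $\chi_1^*=\chi_1$, $\chi_2^*=\chi_2$ and giving (4), whereas for $c_2=-1$ the sign $f^*=-f$ rules out the identity, forcing the transposition together with $\delta=\pm1$, hence $\chi^*\neq\chi$ and $\chi\circ\sigma^2=\chi$, giving (5). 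In family (4), writing $g=\chi\pm\phi_\chi$ with $f=-i\phi_\chi$ when $c_2=1$ and $f=\phi_\chi$ when $c_2=-1$: the constraint $g^*=g$ forces $\chi^*=\chi$ and $\phi_\chi^*=\phi_\chi$ when $c_2=1$, giving (6); when $c_2=-1$ the same constraint collapses to $(\chi^*-\chi)(x)\,(\chi^*-\chi)(y)\equiv 0$, whence $\chi^*=\chi$ and $\phi_\chi=0$, contradicting $f\neq 0$, so this branch is void.

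For the closing assertions, abelianity follows since $\chi$ is abelian, $\phi_\chi$ is central and satisfies $\phi_\chi(xyz)=\phi_\chi(xzy)$ directly from \eqref{spsine} (both $\chi(yz)$ and $\phi_\chi(yz)$ are symmetric in $y,z$), abelianity being preserved under linear combinations, with the case $g=0$ on $S^2$ trivial. For continuity, using linear independence I would choose $y_1,y_2$ making $\begin{pmatrix}g(y_1)&f(y_1)\\ g(y_2)&f(y_2)\end{pmatrix}$ invertible; solving the two instances $g(x\sigma(y_j))=g(x)g(y_j)+f(x)f(y_j)$ of \eqref{E3} exhibits $f$ as a linear combination of the continuous maps $x\mapsto g(x\sigma(y_j))$, so $f\in C(S)$. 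The functions $\chi,\chi^*,\chi_1,\chi_2,\phi_\chi$ are then continuous because in each case they are explicit linear combinations of $g$ and $f$ (for instance $\chi=g+if$, $\chi^*=g-if$ in (5), and $\phi_\chi=if$ in (6)); note this yields continuity of $\chi^*$ without assuming $\sigma$ continuous.
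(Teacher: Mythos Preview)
Your proposal is correct and follows essentially the same architecture as the paper's proof: split into dependent and independent cases, in the latter invoke Lemma~\ref{Le1} to get $g^*=g$ and $f^*=\pm f$, pass to the $\sigma$-free cosine equation, and apply Theorem~\ref{phi1}. The differences are cosmetic. The paper separates the subcase $g=0$ on $S^2$ before discussing dependence, while you absorb it into the dependent case; the paper handles the two sign cases $f^*=\pm f$ as separate Subcases~A and~B, whereas you treat them uniformly via $c_2\in\{1,-1\}$; and in Subcase~B (your $c_2=-1$, family~(2) of Theorem~\ref{phi1}) the paper derives $\chi_1^*=\chi_2$, $\chi_2^*=\chi_1$ by writing out the linear relations \eqref{Par1}--\eqref{Par2} and invoking Artin-type independence of characters, while you argue more conceptually that $\sigma$ must permute $\{\chi_1,\chi_2\}$ because these are the only nonzero multiplicative functions in the $\sigma$-invariant span of $g,f$. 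Both routes are fine.

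Two small points of bookkeeping to tighten. First, in the $c_2=1$ branch of family~(4) you write that ``$g^*=g$ forces $\chi^*=\chi$ and $\phi_\chi^*=\phi_\chi$''; strictly you need $f^*=f$ (i.e.\ $\phi_\chi^*=\phi_\chi$) first, and then $g^*=g$ gives $\chi^*=\chi$. Second, in the $c_2=-1$ branch of family~(4), the constraint $g^*=g$ together with $\phi_\chi^*=-\phi_\chi$ only gives $\chi^*-\chi=\pm 2\phi_\chi$; the identity $(\chi^*-\chi)(x)(\chi^*-\chi)(y)\equiv 0$ that you quote then follows by feeding $\phi_\chi=\pm\tfrac{1}{2}(\chi^*-\chi)$ back into the special sine law \eqref{spsine} (equivalently, as the paper does, back into \eqref{E3}). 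Make that extra step explicit. Your continuity argument via an invertible $2\times 2$ system is correct; the paper uses the slightly simpler observation that a single $y_0$ with $f(y_0)\neq 0$ already expresses $f(x)$ as $\bigl(g(x\sigma(y_0))-g(y_0)g(x)\bigr)/f(y_0)$.
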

\begin{proof}
If $g=0$ then $f=0$. This is case (1). So from now on we assume that $g\neq 0$. Suppose that $g=0$ on $S^2$, then we get from equation \eqref{E3} that 
\begin{equation}
g(x)g(y)+f(x)f(y)=0.
\label{A1}
\end{equation}
Since $g\neq 0$ we obtain from equation \eqref{A1} that $f=cg$ where $c\in \mathbb{C}$ is a constant. Then if we take this into account in equation \eqref{A1} we get that $(c^2+1)g(x)g(y)=0$. This implies that $c^2+1=0$ because $g\neq 0$, so $c\in \lbrace i,-i\rbrace$. This occurs in part (2) of Theorem \ref{TE3}. If $f=0$, then equation \eqref{E3} can be written as follows $g(x\sigma(y))=g(x)g(y)$. So $g=:\chi$ is multiplicative and $\chi^*=\chi$. This occurs in part (3) of Theorem \ref{TE3} with $\alpha=0$. Now we assume that $g\neq 0$ on $S^2$, $f\neq 0$ and we  discuss two cases according to whether $f$ and  $g$ are linearly dependent or not.\\
\underline{First case :} $g$ and $f$ are linearly dependent. There exists a constant $\alpha \in \mathbb{C}$ such that $f=\alpha g$, so equation \eqref{E3} can be written as 
\begin{equation}
g(x\sigma(y))=(1+\alpha^2) g(x)g(y),\quad x,y\in S.
\label{A3}
\end{equation}
Since $g\neq 0$ on $S^2$ and $f\neq 0$, we deduce from \eqref{A3} that $\alpha \notin \lbrace 0,i,-i\rbrace$, and then $\chi:=(1+\alpha^2) g$ is multiplicative and $\chi^*=\chi$. This occurs in case (3) with $\alpha \neq 0$.\\
\underline{Second case :} $g$ and $f$ are linearly independent. According to Lemma \ref{Le1} $g^*=g$ and $f=f^*$ or $f^*=-f$.\\
\underline{Subcase A :} $f=f^*$.  By applying Eq. \eqref{E3} to the pair $(\sigma(x),y)$, we obtain \begin{equation}
g(xy)=g(x)g(y)+f(x)f(y),\quad x,y\in S.
\label{A14}
\end{equation}
Defining $l:=if$, equation \eqref{A14} can be written as follows
$$g(xy)=g(x)g(y)-l(x)l(y),\quad x,y\in S.$$
According to Theorem \ref{phi1} and taking into account that $f$ and $g$ are linearly independent, we have the following possibilities :\\
(i) $g=\dfrac{\delta ^{-1}\chi_1+\delta \chi_2}{\delta^{-1}+\delta}$ and $l=\dfrac{\chi_1-\chi_2}{i(\delta^{-1}+\delta)}$, where $\delta \in \mathbb{C}\backslash \lbrace 0,i,-i\rbrace$ is a constant and $\chi_1,\chi_2:S\rightarrow\mathbb{C}$ are two multiplicative functions such that $\chi_1\neq \chi_2$. Since $g=g^*$, $f=f^*$ and $l=if$, we deduce that $f=\dfrac{\chi_2-\chi_1}{\delta^{-1}+\delta}$, $\chi_1=\chi_1^*$ and $\chi_2=\chi_2^*$. This is case (4).\\
(ii) $g=\chi \pm l$ and  $l=\phi_{\chi}$, where $\chi:S\rightarrow\mathbb{C}$ is a non-zero  multiplicative function. Since $f^*=f$ and $g^*=g$, we see that  $\chi^*=\chi$ and $\phi_{\chi}^*=\phi_{\chi}$. In addition $l=if$ implies that $f=-i\phi_{\chi}$. This occurs in part (6).\\
\underline{Subcase B :} $f^*=-f$. Equation \eqref{E3} can be written as follows
$$g(xy)=g(x)g(y)-f(x)f(y),\quad x,y\in S.$$
Similarly to the previous case, we get according to Theorem \ref{phi1} and taking into account that $f$ and $g$ are linearly independent the two cases:\\
(i) $g=\dfrac{\delta ^{-1}\chi_1+\delta \chi_2}{\delta^{-1}+\delta}$ and $f=\dfrac{\chi_1-\chi_2}{i(\delta^{-1}+\delta)}$, where $\delta \in \mathbb{C}\backslash \lbrace 0,i,-i\rbrace$ is a constant and $\chi_1,\chi_2:S\rightarrow\mathbb{C}$ are two different multiplicative functions. Since $f^*=-f$ and $g^*=g$, we get 
\begin{equation}
\delta ^{-1}(\chi_1-\chi_1^*)+\delta (\chi_2-\chi_2^*)=0,
\label{Par1}
\end{equation}
\begin{equation}
\chi_1+\chi_1^*=\chi_2+\chi_2^*.
\label{Par2}
\end{equation}
Since $\chi_1\neq\chi_2$, we obtain by the help of \cite[Corollary 3.19]{ST1} that $\chi_1=\chi_2^*$ and $\chi_2=\chi_1^*$.  Then \eqref{Par1} reduces to 
$$\left(\delta^{-1}-\delta \right) \left(\chi_1-\chi_1^* \right)=0.$$ 
This implies that $\delta^{-1}-\delta=0$ since $\chi_1\neq\chi_2$. That is $\delta=\pm 1$. This occurs in case (5) with $\chi_1=\chi$ and $\chi_2=\chi^*$. In addition $\chi_1=\chi_2^*$ implies that $\chi\circ\sigma^2=\chi$.\\
(ii) $g=\chi \pm f$ and  $f=\phi_{\chi}$ where $\chi:S\rightarrow\mathbb{C}$ is a non-zero multiplicative function. If $g=\chi+ f$ we obtain since $f^*=-f$ and $g^*=g$ that $g=\chi^*- f$. Adding and subtracting this from $g=\chi +f$ we get that 
\[g=\dfrac{\chi+\chi^*}{2}\quad\text{and}\quad f=\dfrac{\chi^*-\chi}{2}.\]
By assumption $f\neq 0$, so $\chi\neq \chi^*$. By substituting the forms of $f$ and $g$ in \eqref{E3} we find that $\chi=\chi^*$. This case does not occur. Now if $g=\chi-f$, we show by the same way that $g=\dfrac{\chi+\chi^*}{2}$ and $f=\dfrac{\chi-\chi^*}{2}$ which leads by substitution to $\chi=\chi^*$ ($f=0$). This case does not occur.\par
Conversely we check by elementary computations that the forms (1), (2), (3), (4), (5) and (6) satisfy \eqref{E3}.\par 
 Finally, suppose that $S$ is a topological semigroup and $g\in C(S)$. In case (1), $f=0\in C(S)$. In case (2), $f=\pm ig\in C(S)$. Now if $f\neq 0$, the continuity of $f$ follows easily from the continuity of $g$ and the functional equation \eqref{E3}. Let $y_0\in S$ such that $f(y_0)\neq 0$, we get from \eqref{E3} that 
\[f(x)=\dfrac{g(x\sigma(y_0))-g(y_0)g(x)}{f(y_0)}\ \text{for}\ x\in S.\] 
The function $x\mapsto g(x\sigma(y_0))$ is continuous, since the right translation $x\mapsto x\sigma(y_0)$ from $S$ into $S$ is continuous, so $f$ is continuous as a linear combination of continuous functions. In cases (4) and (5) we get the continuity of $\chi_1,\chi_2,\chi,\chi^*$ by the help of \cite[Theorem 3.18]{ST1}. In case (6), $\phi_{\chi}=if\in C(S)$ and $\chi=(g\pm \phi_{\chi})\in C(S)$. This completes the proof of Theorem \ref{TE3}.
\end{proof}
Now we relate the solution of the variant \eqref{var1} to the functional equation \eqref{E3}. 
\begin{prop}
The functional equation \eqref{E3} and its variant \eqref{var1}, namely
\[g(\sigma(y)x)=g(x)g(y)+f(x)f(y),\quad x,y\in S,\]
 have the same solutions.
 \label{pp1}
 \end{prop}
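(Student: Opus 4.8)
The plan is to exploit the fact that the right-hand sides of \eqref{E3} and \eqref{var1} are \emph{identical} and symmetric in $x$ and $y$, so that the whole question reduces to comparing the two left-hand sides $g(x\sigma(y))$ and $g(\sigma(y)x)$. If I can show that every solution has $g$ central, i.e. $g(uv)=g(vu)$ for all $u,v\in S$, then taking $u=x$ and $v=\sigma(y)$ gives $g(x\sigma(y))=g(\sigma(y)x)$; since the right-hand sides agree verbatim, $(f,g)$ will satisfy \eqref{E3} if and only if it satisfies \eqref{var1}. Thus the real content is the centrality of $g$ for solutions of each equation.

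For the implication from \eqref{E3} to \eqref{var1} this is immediate: by Theorem \ref{TE3} every solution $(f,g)$ of \eqref{E3} has $f$ and $g$ abelian, in particular $g$ central. Hence $g(\sigma(y)x)=g(x\sigma(y))=g(x)g(y)+f(x)f(y)$, which is exactly \eqref{var1}.

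For the converse I would pass to the opposite semigroup $S^{\mathrm{op}}=(S,\star)$, where $x\star y:=yx$. One first checks that $\sigma$ remains an automorphism of $S^{\mathrm{op}}$, since $\sigma(x\star y)=\sigma(yx)=\sigma(y)\sigma(x)=\sigma(x)\star\sigma(y)$. Rewriting \eqref{var1} in terms of $\star$ gives $g(x\star\sigma(y))=g(\sigma(y)x)=g(x)g(y)+f(x)f(y)$, so $(f,g)$ is precisely a solution of \eqref{E3} for the semigroup $S^{\mathrm{op}}$ equipped with the automorphism $\sigma$. Applying Theorem \ref{TE3} in $S^{\mathrm{op}}$ shows that $g$ is central in $S^{\mathrm{op}}$; but centrality in $S^{\mathrm{op}}$ means $g(yx)=g(xy)$, which is exactly centrality in $S$. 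Therefore $g(x\sigma(y))=g(\sigma(y)x)=g(x)g(y)+f(x)f(y)$, i.e. $(f,g)$ satisfies \eqref{E3}.

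The only delicate point is this reverse direction, and it is conceptual rather than computational: one must notice that $(f,g)$ solving \eqref{var1} on $S$ is the same as $(f,g)$ solving \eqref{E3} on $S^{\mathrm{op}}$, and that the ``abelian'' conclusion of Theorem \ref{TE3} survives this transfer because centrality is a left--right symmetric condition. An alternative, avoiding $S^{\mathrm{op}}$, would be to redo the classification of \eqref{var1} directly using Remark \ref{RR1} (the analogue of Lemma \ref{Le1} for the variant) and then read off centrality from the explicit forms; but that merely reproves Theorem \ref{TE3} in mirror form and is considerably longer, so I would favour the opposite-semigroup argument.
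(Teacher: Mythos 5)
Your proof is correct, and the converse direction takes a genuinely different route from the paper's. The paper proves that every solution of \eqref{var1} has $g$ central by a direct two-case argument: when $f$ and $g$ are linearly dependent it reduces \eqref{var1} to a multiplicativity statement, and when they are independent it invokes Remark \ref{RR1} (the variant analogue of Lemma \ref{Le1}) to get $g^*=g$ and $f^*=\pm f$, whence $g(yx)=g(x)g(y)\pm f(x)f(y)$ and centrality follows by symmetry of the right-hand side. You instead observe that a solution of \eqref{var1} on $S$ is exactly a solution of \eqref{E3} on the opposite semigroup $S^{\mathrm{op}}$ with the same automorphism $\sigma$, and then transfer the ``abelian'' conclusion of Theorem \ref{TE3}, using that centrality is a left--right symmetric notion. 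This is legitimate because Theorem \ref{TE3} is proved for an arbitrary semigroup and an arbitrary (not necessarily involutive) automorphism, both of which survive passage to $S^{\mathrm{op}}$; your verification that $\sigma$ remains an automorphism of $S^{\mathrm{op}}$ is the one point that needs checking, and you did check it. What your approach buys is economy: it bypasses Remark \ref{RR1} entirely and reuses the main classification theorem as a black box. What the paper's approach buys is self-containedness at the level of the lemmas: it never needs the reader to accept that the full theorem transfers to $S^{\mathrm{op}}$, only the short computations already recorded in Remark \ref{RR1}. Either argument is complete.
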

 \begin{proof}
 Theorem \ref{TE3} shows that if $(g,f)$ is a solution of \eqref{E3}, then $g$ is abelian, in particular central. So any solution $(g,f)$ of \eqref{E3} is also a solution of \eqref{var1}. Now let $(g,f)$ be a solution of \eqref{var1}. We show that $g$ is central.\\
 \underline{First case:} $g$ and $f$ are linearly dependent. That is $f=\delta g$ for some constant $\delta \in \mathbb{C}$. Eq. \eqref{var1} can be written as 
 \[g(\sigma(y)x)=(1+\delta^2)g(x)g(y),\quad x,y\in S.\]
 If $\delta \in \left\lbrace i,-i \right\rbrace $, then $g=0$ on $S^2$, so $g$ is central and we are done. If $\delta \neq \pm i$, we get that $(1+\delta^2)g$ is multiplicative. Then $g$ is central.\\
 \underline{Second case:} $g$ and $f$ are linearly independent. By Remark \ref{RR1} we have $g=g^*$ and $f=f^*$ or $f=-f^*$. By applying Eq.\eqref{var1} to $(\sigma(x),y)$ we obtain 
 \[g(yx)=g(x)g(y)\pm f(x)f(y),\quad x,y\in S.\]
 This implies that $g$ is central. This completes the proof of Proposition \ref{pp1}.
 \end{proof}
\section{The sine addition formula \eqref{E1}}
In this section we solve the functional equation \eqref{E1} on semigroups. The following lemma gives some key properties of the solutions of Eq. \eqref{E1}. We use similar computations to those of \cite[Theorem 5.1]{Ase2} but of course here $\sigma$ is not involutive.
 \begin{lem}
 Suppose $f,g:S\rightarrow\mathbb{C}$ satisfy Eq. \eqref{E1} such that $f$ and $g$ are linearly independent. Then $f^*=f$ and $g^*=g$.
 \label{Le2}
 \end{lem}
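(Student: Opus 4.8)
The plan is to mirror the associativity technique of Lemma \ref{Le1}, adapted to the sine addition law \eqref{E1}. First I would exploit associativity by computing $f(x\sigma(y)\sigma(z))$ in two ways, as $f((x\sigma(y))\sigma(z))$ and as $f(x(\sigma(y)\sigma(z)))$, using $\sigma(y)\sigma(z)=\sigma(yz)$, and comparing the results. After substituting \eqref{E1}, this yields
\[ f(z)\,g(x\sigma(y)) = f(x)\big[g(yz)-g(y)g(z)\big] + g(x)\big[f(yz)-f(y)g(z)\big]. \]
Since $f\neq 0$, fixing $z_0$ with $f(z_0)\neq 0$ produces a decomposition $g(x\sigma(y)) = f(x)H(y)+g(x)K(y)$ with explicit $H,K$; substituting this back and using the linear independence of $f$ and $g$ in the variable $x$ gives the two $\sigma$-free product formulas $g(yz)=g(y)g(z)+f(z)H(y)$ and $f(yz)=f(y)g(z)+f(z)K(y)$.

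Next I would show that $H$ and $K$ are themselves linear combinations of $f$ and $g$. Writing $z_0=\sigma(w_0)$, which is legitimate since $\sigma$ is a bijection, and applying \eqref{E1} to $f(yz_0)=f(y\sigma(w_0))$, the definition of $K$ collapses to $K=a_1 f+a_2 g$ for constants $a_1,a_2$. For $H$ the same idea applies, except that \eqref{E1} governs $f$ and not $g$, so instead I would re-expand $g(yz_0)=g(y\sigma(w_0))$ through the decomposition itself, obtaining $H=b_1 f+b_2 g$. This gives
\[ g(yz)=g(y)\big[g(z)+b_2 f(z)\big]+f(y)\big[b_1 f(z)\big],\quad f(yz)=f(y)\big[g(z)+a_1 f(z)\big]+g(y)\big[a_2 f(z)\big]. \]

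Then I would apply these formulas to the pair $(x,\sigma(y))$ and compare with the originals. Matching the $f$-formula evaluated at $(x,\sigma(y))$ against \eqref{E1}, and using linear independence in $x$, yields $g=g^*+a_1 f^*$ and $f=a_2 f^*$, so $a_2\neq 0$ since $f\neq 0$. To force $a_1=0$ I would apply \eqref{E1} to $(\sigma(x),y)$, substitute these two relations, and compare the resulting product formula for $f(xy)$ with the one above; the two agree only if $2a_1 f(x)f(y)=0$, hence $a_1=0$ (alternatively one can run the two-case split $a_1=0$ versus $a_1\neq 0$ exactly as in Lemma \ref{Le1}, with the second case forcing $g$ to be a scalar multiple of $f$, a contradiction). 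Finally, with $a_1=0$ we have $g^*=g$, and comparing the $g$-formula at $(x,\sigma(y))$ with the decomposition $g(x\sigma(y))=f(x)H(y)+g(x)K(y)$ gives $b_2=0$ and then $a_2 g=g$, so $a_2=1$; therefore $f=f^*$ and $g=g^*$.

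I expect the main obstacle to be the treatment of $H$: because \eqref{E1} constrains $f$ and not $g$, one cannot simplify $g(yz_0)$ directly and must instead close the argument self-referentially through the decomposition, which is precisely where the non-involutivity of $\sigma$ makes the bookkeeping heavier than in the involutive case. The second delicate point is pinning down $a_2=1$ \emph{exactly}, rather than merely $a_2=\pm 1$ as occurs for the cosine equation in Lemma \ref{Le1}; this sharper conclusion relies on the $g$-product formula and cannot be extracted from the $f$-equation alone.
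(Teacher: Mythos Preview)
Your argument is correct and follows the paper's overall strategy (associativity, the decomposition $g(x\sigma(y))=f(x)H(y)+g(x)K(y)$, the product formulas for $f$ and $g$, and the relations $g=g^*+a_1 f^*$, $f=a_2 f^*$). The step forcing $a_1=0$ is essentially the paper's computation as well.

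Where you diverge from the paper is in how you pin down $a_2=1$. You express $H=b_1 f+b_2 g$ \emph{immediately}, by feeding $z_0=\sigma(w_0)$ back into the decomposition itself; this gives you an explicit product formula for $g$ with constant coefficients, and comparing it at $(x,\sigma(y))$ with the decomposition yields $b_2=0$ and $a_2=1$ in one stroke. The paper instead postpones the analysis of $h$ (your $H$): after establishing $g^*=g$ it writes $f(xy)=f(x)g(y)+\beta f(y)g(x)$, computes $f(xyz)$ two ways using the $g$-product formula $g(yz)=g(y)g(z)+f(z)h(y)$, and only then extracts $h=af+bg$ from the resulting identity, arriving at $(\beta^2-\beta)g=b\beta f$ and hence $\beta=1$. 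Your self-referential use of the decomposition to linearize $H$ early is a genuine shortcut: it avoids the second associativity computation on $f(xyz)$ altogether and makes the sharper conclusion $a_2=1$ (not merely $a_2^2=1$) fall out of a single comparison. Both routes are valid; yours is a bit cleaner.
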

 \begin{proof}
 Computing $f(x\sigma(yz))$  in two different ways using equation \eqref{E1}, we obtain after some rearrangement that
\begin{equation}
f(x)\left[g(yz)-g(y)g(z) \right]+g(x)\left[ f(yz)-f(y)g(z)\right]=f(z)g(x\sigma(y)).
\label{B6}  
\end{equation}
Since $f\neq 0$, there exists $z_0\in S$ such that $f(z_0)\neq 0$ and hence 
\begin{equation}
f(x)h(y)+g(x)k(y)=g(x\sigma(y)),
\label{B7}
\end{equation}
where 
$$h(y)=\dfrac{g(yz_0)-g(y)g(z_0)}{f(z_0)},$$
and $$k(y)=\dfrac{f(yz_0)-f(y)g(z_0)}{f(z_0)}.$$
By using Eq. \eqref{E1} and the fact that $\sigma$ is a bijection, we get
\begin{equation}
k=\alpha f+\beta g,
\label{B10}
\end{equation}
for some constants $\alpha,\beta \in \mathbb{C}$. Now by using \eqref{B7}, equation \eqref{B6} becomes
\begin{align}
\begin{split}
f(x)\left[g(yz)-g(y)g(z) \right]+g(x)\left[ f(yz)-f(y)g(z)\right]\\= f(x)f(z)h(y)+g(x)f(z)k(y).
\label{B8}
\end{split}
\end{align}
Since $f$ and $g$ are linearly independent we deduce from \eqref{B8} that for all $y,z\in S$
\begin{equation}
g(yz)=g(y)g(z)+f(z)h(y),
\label{D1}
\end{equation}
and 
\begin{equation}
f(yz)=f(y)g(z)+f(z)k(y).
\label{B9}
\end{equation}
By using \eqref{B10}, equation \eqref{B9} can be written as follows
$$f(yz)=\left(g(z)+\alpha f(z) \right)f(y)+\beta f(z)g(y).$$
This implies that 
$$f(y\sigma(z))=\left(g^*(z)+\alpha f^*(z) \right)f(y)+\beta f^*(z)g(y).$$
By comparing this last identitie with Eq. \eqref{E1} and using the linear independence of $f$ and $g$ we deduce that
\begin{equation}
g=g^*+\alpha f^*,
\label{B11}
\end{equation}
\begin{equation}
f=\beta f^*.
\label{B12}
\end{equation}
Since $f\neq 0$ we get from \eqref{B12} that $\beta \neq 0$, and from \eqref{B11} that $g^*=g-\dfrac{\alpha}{\beta}f$.\\ 
So, for all $x,y\in S$ we have 
\begin{align*}
f(x\sigma(y)) &= \beta f^*(x\sigma(y))=\beta f(\sigma(x)\sigma(\sigma(y))) \\
& = \beta f^*(x)g^*(y)+\beta f^*(y)g^*(x) \\
& = f(x)\left[g(y)-\dfrac{\alpha }{\beta}f(y) \right]+f(y)\left[g(x)-\dfrac{\alpha}{\beta} f(x) \right] \\
&=f(x)g(y)+f(y)g(x)-\dfrac{2\alpha}{\beta} f(x)f(y)\\
&=f(x\sigma(y))-\dfrac{2\alpha}{\beta} f(x)f(y). 
\end{align*}
So $\alpha=0$ since $f\neq 0$, and then $g^*=g$. Now if we apply Eq. \eqref{E1} to the pair $(\sigma(x),y)$ and multiplying the preceding equation by $\beta$ we get
\begin{equation}
f(xy)=f(x)g(y)+\beta f(y)g(x).
\label{BB1}
\end{equation}
Computing $f(xyz)$ in two different ways, we obtain from \eqref{BB1} by using Eq. \eqref{D1} after some rearrangement that
\begin{equation}
\left(\beta^2-\beta \right)g(x)g(y)=\beta f(y)h(x)-f(x)h(y).
\label{BB2} 
\end{equation}
Since $f\neq 0$, we deduce from Eq. \eqref{BB2} that 
\begin{equation}
h=af+bg,
\label{BB3}
\end{equation}
for some constants $a,b\in \mathbb{C}$. Taking Eq. \eqref{BB3} into account Eq. \eqref{BB2} becomes 
\begin{equation}
\left(\beta^2-\beta \right)g(x)g(y)=f(x)\left( (a\beta-a) f(y)-bg(y)\right) +b\beta g(x)f(y).
\label{BB4} 
\end{equation}
Since $f$ and $g$ are linearly independent, we deduce from Eq. \eqref{BB4} that 
$$\left( \beta^2-\beta\right) g=b\beta f.$$
This implies that $\beta=1$ and $b=0$ since $\beta\neq 0$, $f$ and $g$ are linearly independent. So $f=f^*$. This completes the proof of Lemma \ref{Le2}.
 \end{proof}
 \begin{rem}
Similar computations shows that the result of Lemma \ref{Le2} hold for the variant \eqref{var2} of  equation \eqref{E1}.
 \label{RR2}
\end{rem}
 Now we are ready to solve the functional equation \eqref{E1}.
\begin{thm}
The solutions $f,g : S\rightarrow \mathbb{C}$ of Eq. \eqref{E1} are the following pairs:
\item[(1)] $f=0$ and $g$ is arbitrary.
\item[(2)] $f$ is any non-zero function such that $f=0$ on $S^2$, while $g=0$.
\item[(3)] $f=\dfrac{1}{2\alpha}\chi$ and $g=\dfrac{1}{2}\chi$, where $\chi :S\rightarrow \mathbb{C}$ is a non-zero multiplicative function such that $\chi^*=\chi$ and $\alpha\in \mathbb{C}\backslash \lbrace 0\rbrace$.
\item[(4)] $f=c\left(\chi_1-\chi_2 \right) $ and $g=\dfrac{\chi_1+\chi_2}{2}$, where $\chi_1,\chi_2 :S\rightarrow \mathbb{C}$ are two different multiplicative functions such that $\chi_1^*=\chi_1$, $\chi_2^*=\chi_2$ and $c\in \mathbb{C}\backslash \lbrace 0\rbrace$.
\item[(5)] $f=\phi_{\chi}$ and $g=\chi$ where  $\chi: S \rightarrow \mathbb{C}$ is a non-zero multiplicative function such that $\chi^*=\chi$ and $\phi_{\chi}^*=\phi_{\chi}$.\\
Note that, off the exceptional case (1) $f$ and $g$ are Abelian.\par 
Furthermore, off the exceptional case (1),  if $S$ is a topological semigroup and $f\in C(S)$, then $g,\chi$,$\chi_1,\chi_2,\phi_{\chi}\in C(S)$.
\label{P1}
\end{thm}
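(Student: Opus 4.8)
The plan is to reduce \eqref{E1} to the $\sigma$-free sine addition law $f(xy)=f(x)g(y)+f(y)g(x)$, whose solutions are already catalogued in Theorem \ref{phi2}, and then to read off the extra symmetry constraints that the presence of $\sigma$ imposes. First I would dispose of the trivial branch: if $f=0$, then \eqref{E1} holds with $g$ arbitrary, giving case (1). So from now on I assume $f\neq 0$ and split the analysis according to whether $f$ and $g$ are linearly dependent or independent.

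Suppose first that $f$ and $g$ are linearly dependent; since $f\neq 0$ this means $g=\lambda f$ for some $\lambda\in\mathbb{C}$. Substituting into \eqref{E1} gives $f(x\sigma(y))=2\lambda f(x)f(y)$. If $\lambda=0$, then $f$ vanishes on $S\sigma(S)$; because $\sigma$ is an automorphism it is surjective, so $S\sigma(S)=S^2$, and we land in case (2), namely $f=0$ on $S^2$ and $g=0$. If $\lambda\neq 0$, then Lemma \ref{M} (with $\beta=2\lambda$) furnishes a nonzero multiplicative $\chi$ with $2\lambda f=\chi$ and $\chi^*=\chi$; setting $\alpha=\lambda$ yields $f=\frac{1}{2\alpha}\chi$ and $g=\frac12\chi$, which is case (3).

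Now suppose $f$ and $g$ are linearly independent. Lemma \ref{Le2} then gives $f^*=f$ and $g^*=g$. Applying \eqref{E1} to the pair $(\sigma(x),y)$ and using $\sigma(x)\sigma(y)=\sigma(xy)$ together with $f^*=f$ and $g^*=g$, the automorphism disappears and I obtain the $\sigma$-free identity $f(xy)=f(x)g(y)+f(y)g(x)$. Since $f\neq 0$, Theorem \ref{phi2} applies. Its branch ``$f=0$ on $S^2$, $g=0$'' is incompatible with linear independence and is discarded. The branch $f=c(\chi_1-\chi_2)$, $g=\frac{\chi_1+\chi_2}{2}$ must be intersected with $f^*=f$ and $g^*=g$, which read $\chi_1^*-\chi_2^*=\chi_1-\chi_2$ and $\chi_1^*+\chi_2^*=\chi_1+\chi_2$; adding and subtracting gives $\chi_1^*=\chi_1$ and $\chi_2^*=\chi_2$, i.e. case (4). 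The branch $f=\phi_{\chi}$, $g=\chi$ forces $\chi^*=\chi$ and $\phi_{\chi}^*=\phi_{\chi}$, i.e. case (5).

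It remains to verify the converse, the abelianness, and the continuity claim. The converse is a direct substitution in each of (1)--(5). For abelianness (off case (1)), multiplicative functions are abelian, and any solution $\phi_{\chi}$ of \eqref{spsine} is abelian because $\phi_{\chi}(xy)=\phi_{\chi}(x)\chi(y)+\phi_{\chi}(y)\chi(x)$ and $\phi_{\chi}(xyz)=\phi_{\chi}(x)\chi(y)\chi(z)+\phi_{\chi}(y)\chi(x)\chi(z)+\phi_{\chi}(z)\chi(x)\chi(y)$ are symmetric in their arguments. For continuity, in case (3) one has $g=\alpha f\in C(S)$; in cases (4) and (5) I would \emph{not} solve for $g$ through $\sigma$ (which is not assumed continuous), but rather through the $\sigma$-free equation derived above: fixing $x_0$ with $f(x_0)\neq 0$ gives $g(y)=\bigl(f(x_0y)-f(y)g(x_0)\bigr)/f(x_0)$, which is continuous since left translation by $x_0$ is continuous, and then the characters are recovered as linear combinations of $f$ and $g$ (or via \cite[Theorem 3.18]{ST1}) while $\phi_{\chi}=f$. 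The step requiring the most care is precisely this topological one: the entire reduction hinges on Lemma \ref{Le2} to make $\sigma$ vanish, and the continuity of $g$ must be routed through the $\sigma$-free identity so as never to presuppose continuity of $\sigma$; one must also remember to excise the degenerate Theorem \ref{phi2} branch in the linearly independent case.
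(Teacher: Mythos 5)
Your proof is correct and follows essentially the same route as the paper: Lemma \ref{Le2} removes the automorphism in the linearly independent case so that Theorem \ref{phi2} applies, while the linearly dependent case reduces to Lemma \ref{M}. The only differences are minor: you obtain case (2) as the $\lambda=0$ instance of linear dependence rather than via \cite[Exercise 1.1(b)]{ST1} as the paper does, and your continuity argument for $g$ (through the $\sigma$-free identity and left translation, so as not to assume continuity of $\sigma$) is spelled out more carefully than the paper's one-line remark.
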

\begin{proof}
If $f=0$ then $g$ will be arbitrary. This occurs in case (1). From now on we assume that $f\neq 0$. Suppose that $f=0$ on $S^2$. For all $x,y\in S$, we get from equation \eqref{E1} that 
\begin{equation}
f(x)g(y)+f(y)g(x)=0.
\label{B1}
\end{equation}
Since $f\neq 0$ we deduce from equation \eqref{B1} according to \cite[Exercise 1.1(b)]{ST1} that $g=0$. This occurs in part (2) of Theorem \ref{P1}. Now we assume that $f\neq 0$ on $S^2$ and we  discuss two cases according to whether $f$ and  $g$ are linearly dependent or not.\\
\underline{First case :} $f$ and $g$ are linearly dependent. There exists a constant $\alpha \in \mathbb{C}$ such that $g=\alpha f$, so equation \eqref{E1} can be written as follows $f(x\sigma(y))=2\alpha f(x)f(y)$. This implies that $\alpha \neq 0$, since $f\neq 0$ on $S^2$. So the function $\chi:=2\alpha f$ is multiplicative and $\chi^*=\chi$. This is case (3).\\
\underline{Second case :} $f$ and $g$ are linearly independent. According to Lemma \ref{Le2} we have $f=f^*$ and $g=g^*$. So equation \eqref{E1} becomes 
\begin{equation}
f(xy)=f(x)g(y)+f(y)g(x).
\end{equation}
According to Theorem \ref{phi2} and taking into account that $f\neq 0 $, $g\neq 0$, $f^*=f$ and $ g^*=g$ we have the following possibilities :\\
(i) $f=c\left( \chi _1 -\chi _2\right) $ and $g=\dfrac{\chi _1+\chi _2}{2}$, for some constant $c\in \mathbb{C}\backslash \lbrace 0\rbrace$ and $\chi_1, \chi _2 : S\rightarrow \mathbb{C}$ are two different multiplicative functions such that $\chi_1^*=\chi_1$ and $\chi_2^*=\chi_2$. This is case (4).\\
(ii) $f=\phi_{\chi}$ and $g=\chi$ where  $\chi: S \rightarrow \mathbb{C}$ is a non-zero multiplicative function such that $\phi_{\chi}^*=\phi_{\chi}$ and $\chi^*=\chi$. This occurs in part (5) of Theorem \ref{P1}.\par 
 Conversely we check by elementary computations that if $f, g$ have one of the forms (1)--(5) then $(f,g)$ is a solution of equation \eqref{E1}.\par 
 For the continuity statements, the continuity of $g$ follows easily from the continuity of $f$ and the functional equation \eqref{E1}. In case (4) we get the continuity of $\chi_1$ and $\chi_2$ by the help of \cite[Theorem 3.18]{ST1}. This completes the proof of Theorem \ref{P1}.
\end{proof}
At this point of our discussion about solutions of \eqref{E1}, a natural question comes up: Can we derive the solution of the variant \eqref{var2} of \eqref{E1} from Theorem \ref{P1} ? the next result gives a positive answer.
\begin{prop}
The functional equation \eqref{E1} and its variant \eqref{var2}, namely
$$f(\sigma(y)x)=f(x)g(y)+f(y)g(x),\quad x,y\in S,$$
have the same solutions.
\label{PP2}
\end{prop}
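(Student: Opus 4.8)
The plan is to show the two equations have the same solution set by proving that any solution of one is a solution of the other, using the structural fact that solutions of \eqref{E1} have abelian (in particular central) $g$. I would proceed exactly as in the proof of Proposition \ref{pp1}. First, suppose $(f,g)$ solves \eqref{E1}. By Theorem \ref{P1}, in every non-exceptional case $g$ is abelian, hence central, so $g(x\sigma(y))=g(\sigma(y)x)$ and the right-hand side of \eqref{E1} is symmetric enough that \eqref{var2} follows immediately; the exceptional case (1) with $f=0$ reduces \eqref{var2} to $0=0$, so it holds trivially. Thus every solution of \eqref{E1} solves \eqref{var2}.

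For the converse I would take a solution $(f,g)$ of \eqref{var2} and show $g$ is central, which then lets me rewrite \eqref{var2} as \eqref{E1}. As in Proposition \ref{pp1}, I would split into two cases. If $f$ and $g$ are linearly dependent, write $g=\alpha f$ so that \eqref{var2} becomes $f(\sigma(y)x)=2\alpha f(x)f(y)$; either $\alpha=0$, forcing $f=0$ on $S^2$ (and $g$ central trivially), or $2\alpha f$ is multiplicative, whence $g$ is a scalar multiple of a multiplicative function and is therefore central. If $f$ and $g$ are linearly independent, I invoke Remark \ref{RR2}, which gives $f^*=f$ and $g^*=g$ for the variant; applying \eqref{var2} to the pair $(\sigma(x),y)$ and using $f^*=f$, $g^*=g$ then yields $f(yx)=f(x)g(y)+f(y)g(x)$, so the right-hand side is symmetric in $x$ and $y$, which combined with \eqref{var2} forces $f(\sigma(y)x)=f(x\sigma(y))$ after relabeling and ultimately shows $g$ is central.

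Once $g$ is central in every case, \eqref{var2} reads $f(\sigma(y)x)=f(x)g(y)+f(y)g(x)$ with a symmetric right-hand side, and centrality of $g$ together with the established symmetry lets me replace $\sigma(y)x$ by $x\sigma(y)$, recovering \eqref{E1}. The main obstacle is the second case of the converse: getting from the linear independence and Remark \ref{RR2} to the centrality of $g$ requires carefully applying \eqref{var2} to the shifted pair $(\sigma(x),y)$ and exploiting $f^*=f,\,g^*=g$ to symmetrize, mirroring the computation in Proposition \ref{pp1}. The linearly dependent subcase and the forward direction are routine by comparison.
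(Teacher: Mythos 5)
Your overall strategy matches the paper's, but you have attached the key property to the wrong function, and as written the argument does not close. Equations \eqref{E1} and \eqref{var2} have identical right-hand sides; they differ only in the argument of $f$ on the left, namely $f(x\sigma(y))$ versus $f(\sigma(y)x)$. Passing from one to the other therefore requires the centrality of $f$, not of $g$. Your forward direction (``$g$ is central, so $g(x\sigma(y))=g(\sigma(y)x)$ and \eqref{var2} follows immediately'') establishes nothing about $f(x\sigma(y))$ versus $f(\sigma(y)x)$, and your concluding claim that centrality of $g$ ``lets me replace $\sigma(y)x$ by $x\sigma(y)$'' inside $f$ is a non sequitur. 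You appear to have transplanted the template of Proposition \ref{pp1} too literally: there the unknown on the left-hand side of the equation is $g$, so $g$'s centrality is the relevant property; here it is $f$.

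The ingredients you actually compute do yield what is needed once redirected at $f$. In the forward direction, Theorem \ref{P1} states that off the case $f=0$ both $f$ and $g$ are abelian, so $f$ is central and \eqref{var2} follows (the case $f=0$ is trivial). In the converse, your linearly dependent case shows $2\alpha f$ is multiplicative or $f=0$ on $S^2$, hence $f$ is central; and in the independent case the identity $f(yx)=f(x)g(y)+f(y)g(x)$ obtained via Remark \ref{RR2} has a symmetric right-hand side, so $f(yx)=f(xy)$ directly, i.e.\ $f$ is central --- which is exactly the paper's proof. So the repair amounts to replacing ``$g$ is central'' by ``$f$ is central'' throughout; but as submitted, the forward direction and the final synthesis rest on a property that does not imply the conclusion.
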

\begin{proof}
Theorem \ref{P1} prove that if $(f,g)$ is a solution of Eq. \eqref{E1}, then $f$ is abelian. In particular central. So $(f,g)$ is a solution of the variant \eqref{var2}. Now let $f,g:S\rightarrow\mathbb{C}$ be a solution of \eqref{var2}. It suffices to show that $f$ is central.\\
\underline{First case:} $f$ and $g$ are linearly dependent. There exists a constant $\gamma \in \mathbb{C}$ such that $g=\gamma f$. Equation \eqref{var2} becomes
\[f(\sigma(y)x)=2\gamma f(x)f(y),\quad x,y\in S.\]
If $\gamma =0$, then $f=0$ on $S^2$, so $f$ is central. If $\gamma \neq 0$, then $2\gamma f$ is multiplicative, and then $f$ is central.\\
\underline{Second case:} $f$ and $g$ are linearly independent. According to Ramark \ref{RR2} we have $f=f^*$ and $g=g^*$. If we apply Eq. \eqref{var2} to $(\sigma(x),y)$ we get 
\[f(yx)=f(x)g(y)+f(y)g(x),\quad x,y\in S,\]
which implies that $f$ is central. This completes the proof of Proposition \ref{PP2}.
\end{proof}
\section{The sine subtraction formula \eqref{E2}}
In this section we solve the functional equation \eqref{E2}. The following lemma will be used later.  
\begin{lem}
Let $f,g:S\rightarrow\mathbb{C}$ be a solution of Eq. \eqref{E2} such that $f$ and $g$ are linearly independent. Then $f^*=-f$ and $g^*=g+\beta f$ for some constant $\beta \in \mathbb{C}$.
\label{Le3}
\end{lem}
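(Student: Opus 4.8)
The plan is to mimic the structure of the proof of Lemma \ref{Le2}, since equation \eqref{E2} differs from \eqref{E1} only by a sign. First I would compute $f(x\sigma(yz))$ in two different ways using the associativity of $S$ and the fact that $\sigma$ is an automorphism, namely as $f(x\sigma(y)\sigma(z))$ grouped first as $(x\sigma(y))\sigma(z)$ and then as $x(\sigma(y)\sigma(z))$. Applying \eqref{E2} and rearranging should produce an identity of the form
\begin{equation*}
f(x)\bigl[g(yz)-g(y)g(z)\bigr]-g(x)\bigl[f(yz)-f(y)g(z)\bigr]=-f(z)g(x\sigma(y)).
\end{equation*}
Since $f\neq 0$, I fix $z_0$ with $f(z_0)\neq 0$ and rewrite this as $f(x)h(y)+g(x)k(y)=g(x\sigma(y))$ for suitable functions $h,k$ built from $g$, $f$ and $g(z_0)$, exactly as in Lemma \ref{Le2}. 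Using \eqref{E2} together with the bijectivity of $\sigma$ then forces $k$ to be a linear combination $k=\alpha f+\beta g$ of $f$ and $g$.

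Next I would substitute this expression for $g(x\sigma(y))$ back into the associativity identity and invoke the linear independence of $f$ and $g$ to split it into two equations, one giving $g(yz)$ in terms of products of $g$ and $f$, and one giving $f(yz)=f(y)g(z)-f(z)k(y)$. Feeding in $k=\alpha f+\beta g$ and then evaluating at the pair $(y,\sigma(z))$, so that a factor $g^*$ and $f^*$ appears, and comparing with \eqref{E2}, I expect the linear independence of $f$ and $g$ to yield two scalar relations of the shape $g=g^*+(\text{const})f^*$ and $f=(\text{const})f^*$. Writing the second as $f=\lambda f^*$ with $\lambda\neq 0$ (forced by $f\neq 0$), the target is to show $\lambda=-1$, which is precisely the claim $f^*=-f$; the relation $g^*=g+\beta f$ should then drop out of the first scalar relation once $f^*=-f$ is known.

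The main obstacle, and the point where the sign of \eqref{E2} really matters, is pinning down the value of $\lambda$. In Lemma \ref{Le2} one substitutes the derived forms back into the defining equation and extracts a clean relation of the form $(\lambda^2-\lambda)g=(\text{const})f$, from which linear independence forces $\lambda=1$. Here the antisymmetry of the sine subtraction law changes that computation: I would run the analogous double computation of $f(xy)$ (using the version of \eqref{E2} obtained by applying it to $(\sigma(x),y)$) against $f(xyz)$, rearrange using the formula for $g(yz)$, and expect to arrive at a relation that forces $\lambda=-1$ rather than $\lambda=+1$. The delicate bookkeeping is keeping track of the minus signs throughout, since each application of \eqref{E2} introduces one, and ensuring that the final scalar equation is not degenerate before concluding by linear independence.

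Finally, once $f^*=-f$ and $\lambda=-1$ are established, substituting back into the relation $g=g^*+\alpha f^*$ gives $g^*=g+\alpha f$, so setting $\beta=\alpha$ yields the stated form $g^*=g+\beta f$. This completes the argument; I do not expect any further difficulty beyond the careful sign tracking described above.
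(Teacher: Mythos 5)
Your proposal is correct and follows essentially the same route as the paper: derive $g(yz)=g(y)g(z)-f(z)h(y)$ and the relations $g=g^*+af^*$, $f=-bf^*$ by the Lemma \ref{Le2}-style associativity computation, then pin down the remaining constant by computing $f(xyz)$ in two ways (after writing $h$ as a linear combination of $f$ and $g$) and invoking linear independence to force $b=1$, i.e.\ $f^*=-f$, with $g^*=g+\beta f$ falling out of the first relation. The only caveat is the sign bookkeeping you already flag (e.g.\ with the natural choice of $k$ one gets $f(yz)=f(y)g(z)+f(z)k(y)$ rather than the minus sign you wrote), but since you leave $h,k$ as ``suitable functions'' this does not affect the argument.
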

\begin{proof}
By using similar computations to those of the proof of Lemma \ref{Le2} we find that for all $y,z\in S$
\begin{equation}
g(yz)=g(y)g(z)-f(z)h(y),
\label{s1}
\end{equation} and that 
\begin{align*}
g=g^*+af^*,\\
f=-bf^*,
\end{align*}
for some constants $a,b\in \mathbb{C}$ and some function $h$. Since $f\neq 0$ we can see that $b\neq 0$, and then $g^*=g-af^*=g+\dfrac{a}{b}f$. Choosing $\beta =\dfrac{a}{b}$, we get $g^*=g+\beta f$. Now by letting $x=\sigma(x)$ in \eqref{E2} we get that 
\[f(xy)=f(x)g(y)+bf(y)g(x)+af(x)f(y),\quad x,y\in S.\]
Computing $f(xyz)$ in two different ways and using Eq. \eqref{s1} we get after some simplifications that 
\begin{equation}
g(y)\left((a-ab)f(x)+(b-b^2)g(x) \right)=bf(y)h(x)-f(x)h(y).
\label{s2}
\end{equation}
Since $f\neq 0$ we get from Eq. \eqref{s2} that $h=\delta f+\gamma g$ for some constants $\delta ,\gamma \in \mathbb{C}$. Taking this into account Eq. \eqref{s2} becomes 
\[g(y)\left((a-ab)f(x)+(b-b^2)g(x) \right)=f(y)\left((b\delta -\delta)f(x)+b\gamma g(x) \right)-\gamma g(y)f(x). \]
Since $f$ and $g$ are linearly independent we deduce that 
\[(a-ab)f(x)+(b-b^2)g(x)=-\gamma f(x).\] 
Then $b=1$ since $b\neq 0$. That is $f^*=-f$. This completes the proof of Lemma \ref{Le3}.
\end{proof}
The next theorem generelizes the results about solutions of \eqref{E2} found in \cite[Proposition 3.1]{Pou}, \cite[Theorem 5.1]{Ajb2}, \cite[Proposition 3.2]{Ase1} and \cite[Corollary 4.3]{EB1}.
\begin{thm}
The solutions $f,g:S\rightarrow\mathbb{C}$ of Eq. \eqref{E2} are the following pairs:
\begin{enumerate}
\item[(1)] $f=0$ and $g$ is arbitrary.
\item[(2)] $f$ is any non-zero function such that $f=0$ on $S^2$ and $g=\alpha f$, where $\alpha \in \mathbb{C}$.
\item[(3)] $f=c(\chi-\chi^*)$ and $g=\dfrac{\chi+\chi^*}{2}+c_1\dfrac{\chi-\chi^*}{2}$, where $\chi:S\rightarrow\mathbb{C}$ is a multiplicative function such that $\chi\neq \chi^*$, $\chi\circ \sigma^2=\chi$, $c_1\in \mathbb{C}$ and $c\in \mathbb{C}\backslash \lbrace 0\rbrace$.
\item[(4)] $f=\phi_{\chi}$ and $g=\chi+c_2\phi_{\chi}$, where $\chi:S\rightarrow\mathbb{C}$ is a non-zero multiplicative function such that $\chi^*=\chi$, $\phi_{\chi}^*=-\phi_{\chi}$ and $c_2\in \mathbb{C}$ is a constant.
\end{enumerate}
Note that, off the exceptional case (1) $f$ and $g$ are Abelian.\par 
Moreover, off the exceptional case (1),  if $S$ is a topological semigroup and $f\in C(S)$, then $g,\chi$,$\chi^*,\phi_{\chi}\in C(S)$.
\label{TH3}
\end{thm}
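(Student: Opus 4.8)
The plan is to mirror the architecture of the proofs of Theorem \ref{TE3} and Theorem \ref{P1}: peel off the degenerate cases, and in the main case use Lemma \ref{Le3} to convert \eqref{E2} into a genuine sine addition law so that Theorem \ref{phi2} applies. First I would dispose of the easy situations. If $f=0$, then \eqref{E2} imposes no condition on $g$, which is case (1). Assuming $f\neq 0$, suppose $f=0$ on $S^2$: then \eqref{E2} forces $f(x)g(y)-f(y)g(x)=0$ for all $x,y\in S$, and fixing $y_0$ with $f(y_0)\neq 0$ yields $g=\alpha f$ for a constant $\alpha$, which is case (2). From here on I assume $f\neq 0$ on $S^2$ and split according to the linear (in)dependence of $f$ and $g$.

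If $f$ and $g$ are linearly dependent, say $g=\alpha f$, then \eqref{E2} collapses to $f(x\sigma(y))=0$; since $\sigma$ is surjective, $S\sigma(S)=S^2$, so this says $f=0$ on $S^2$, contradicting the standing assumption, and this branch is empty. The substantive case is $f,g$ linearly independent, where Lemma \ref{Le3} supplies $f^*=-f$ and $g^*=g+\beta f$ for some $\beta\in\mathbb{C}$. Applying \eqref{E2} to the pair $(\sigma(x),y)$ and using $f(\sigma(x)\sigma(y))=f^*(xy)=-f(xy)$ together with these two relations, I would obtain the twisted addition law
\[f(xy)=f(x)g(y)+f(y)g(x)+\beta f(x)f(y),\quad x,y\in S.\]

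The key manoeuvre — the step I expect to carry the whole argument — is to absorb the quadratic term by setting $G:=g+\tfrac{\beta}{2}f$. A direct expansion shows the twisted law becomes the honest sine addition law $f(xy)=f(x)G(y)+f(y)G(x)$, and, crucially, $f^*=-f$ together with $g^*=g+\beta f$ gives $G^*=G$. Since $f\neq 0$ on $S^2$, case (2) of Theorem \ref{phi2} is excluded, so Theorem \ref{phi2} leaves exactly two possibilities for $(f,G)$. In the first, $f=c(\chi_1-\chi_2)$ and $G=\tfrac{\chi_1+\chi_2}{2}$ with $\chi_1\neq\chi_2$ multiplicative; imposing $f^*=-f$ and $G^*=G$ gives $\chi_1^*-\chi_2^*=\chi_2-\chi_1$ and $\chi_1^*+\chi_2^*=\chi_1+\chi_2$, and adding and subtracting these yields $\chi_2=\chi_1^*$ and $\chi_1=\chi_2^*$, whence $\chi:=\chi_1$ satisfies $\chi\circ\sigma^2=\chi$ and $\chi\neq\chi^*$; translating back through $g=G-\tfrac{\beta}{2}f$ produces case (3) with $c_1=-\beta c$. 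In the second, $f=\phi_\chi$ and $G=\chi$, so $G^*=G$ and $f^*=-f$ force $\chi^*=\chi$ and $\phi_\chi^*=-\phi_\chi$, while $g=\chi+c_2\phi_\chi$ with $c_2=-\tfrac{\beta}{2}$ gives case (4).

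It remains to record the converse, the abelian property, and the continuity statement. The converse is a routine substitution check for each of the four forms. The abelian assertion follows because, off case (1), $f$ and $g$ are linear combinations of multiplicative functions and of $\phi_\chi$, all of which are abelian. For continuity, assuming $f\in C(S)$ I would fix $y_0$ with $f(y_0)\neq 0$ and rewrite \eqref{E2} as $g(x)=\bigl(f(x)g(y_0)-f(x\sigma(y_0))\bigr)/f(y_0)$; since the right translation $x\mapsto x\sigma(y_0)$ is continuous, so is $g$, and the remaining functions are then recovered as continuous linear combinations of $f$ and $g$ (e.g. $\chi-\chi^*=f/c$ and $\chi+\chi^*=2g-c_1f/c$ in case (3), and $\phi_\chi=f$, $\chi=g-c_2f$ in case (4)). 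The only genuinely delicate point is the reduction to Theorem \ref{phi2} via the shift $G=g+\tfrac{\beta}{2}f$ and the verification that this shift respects the $\sigma$-symmetry; everything after that is bookkeeping.
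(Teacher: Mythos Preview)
Your proposal is correct and follows essentially the same architecture as the paper's proof: dispose of the degenerate cases, invoke Lemma \ref{Le3} in the linearly independent case, absorb the $\beta$-term via $G:=g+\tfrac{\beta}{2}f$ to obtain a genuine sine addition law, and then classify via Theorem \ref{phi2}. Your upfront verification that $G^*=G$ is a mild streamlining over the paper, which instead re-applies \eqref{E2} to $(\sigma(x),y)$ in subcase (ii) to recover $\chi^*=\chi$, but the two arguments are otherwise the same.
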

\begin{proof}
If $f=0$ it is easy to see that $g$ is arbitrary. This is cas (1). Now we split the discussion in two cases according to whether $f$ and $g$ are linearly dependent or not.\\
\underline{First case:} $f$ and $g$ are linearly dependent. That is $g=\alpha f$ for some constant $\alpha \in \mathbb{C}$. So equation \eqref{E2} becomes
\[f(x\sigma(y))=\alpha f(x)f(y)-\alpha f(y)f(x)=0,\quad x,y\in S.\]
This implies that $f=0$ on $S^2$. This occurs in case (2).\\
\underline{Second case:} $f$ and $g$ are linearly independent. According to Lemma \ref{Le3}, we have $f^*=-f$ and $g^*=g+\beta f$, where $\beta \in \mathbb{C}$ is a constant. Then if we apply Eq. \eqref{E2} to the pair $(\sigma(x),y)$ we obtain 
\[f(xy)=f(x)g(y)+f(y)g(x)+\beta f(x)f(y),\quad x,y\in S.\]
That is 
\[f(xy)=f(x)\left[ g(y)+\dfrac{\beta}{2}f(y)\right]+f(y)\left[ g(x)+\dfrac{\beta}{2}f(x)\right],\quad x,y\in S.\]
According to Theorem \ref{phi2} and taking into account that $f$ and $g$ are linearly indepenedent, we have the following possibilities:\\
(i) $f=c\left( \chi _1 -\chi _2\right) $ and $g+\dfrac{\beta}{2}f=\dfrac{\chi _1+\chi _2}{2}$, for some constant $c\in \mathbb{C}\backslash \lbrace 0\rbrace$ and $\chi_1, \chi _2 : S\rightarrow \mathbb{C}$ are two  different multiplicative functions. Since $f=-f^*$, we get
\[\chi_1+\chi_1^*=\chi_2^*+\chi_2.\]
This implies that $\chi_1=\chi_2^*$ and $\chi_2=\chi_1^*$. This occurs in case (3) with $\chi=\chi_1$, $\chi^*=\chi_2$ and $c_1=\dfrac{-\beta c}{2}$. In addition $\chi_2^*=\chi_1$ implies that $\chi\circ\sigma^2=\chi$.\\
(ii) $f=\phi_{\chi}$ and $g+\dfrac{\beta}{2}f=\chi$ where  $\chi: S \rightarrow \mathbb{C}$ is a non-zero multiplicative function such that $\phi_{\chi}^*=-\phi_{\chi}$. By applying Eq. \eqref{E2} to the pair $(\sigma(x),y)$ we obtain
\[\phi_{\chi}(xy)=\phi_{\chi}(x)\chi(y)+\phi_{\chi}(y)\chi^*(x).\]
On the other hand we have 
\[\phi_{\chi}(xy)=\phi_{\chi}(x)\chi(y)+\phi_{\chi}(y)\chi(x).\]
Comparing these last two identities we can see that $\chi=\chi^*$ since $f\neq 0$. This occurs in part (4) of Theorem \ref{TH3} with $c_2=\dfrac{-\beta}{2}$.\par 
For the converse we can check easily that the forms (1)--(4) satisfy Eq. \eqref{E2}. Finally, if $S$ is a topological semigroup, the continuity statements are easy to verify. This completes the proof of Theorem \eqref{TH3}.
\end{proof}
In the next section we shall apply our theory to two different types of groups. The first one is abelian and the second one is not.
\section{Applications}
\begin{App}
Let $S=(\mathbb{R}, + )$,  let $\beta \in \mathbb{R}\backslash \lbrace 0\rbrace$ be a fixed element and let $\sigma (x)=\beta x$ for all $x\in \mathbb{R}$. The functional equations \eqref{E3} and \eqref{E1} can be written respectively as follows :
\begin{equation}
g(x+\beta y)=g(x)g(y)+f(x)f(y),\quad x,y\in \mathbb{R},
\label{ex1}
\end{equation}
\begin{equation}
f(x+\beta y)=f(x)g(y)+f(y)g(x),\quad x,y\in \mathbb{R}.
\label{ex2}
\end{equation}
We note that equation \eqref{ex1} with $\beta =-1$ is \cite[Example 4.18]{ST1}, and equation \eqref{ex2} with $\beta=1$ is \cite[Example 4.5]{ST1}. We are interested to determine the solutions of  \eqref{ex1} and \eqref{ex2} when $\beta \in \mathbb{R}\backslash \lbrace 0,-1,1\rbrace$. For this we apply Theorem \ref{TE3} to Eq. \eqref{ex1} and Theorem \ref{P1} to Eq. \eqref{ex2}. Let $\chi:S\rightarrow \mathbb{C}$ be a non-zero multiplicative function such that 
\[\chi(\beta x)=\chi(x),\ \text{for all}\ x\in \mathbb{R}.\]
Since $S$ is a group, then $\chi$ is a character. So we get $\chi \left((\beta -1)x \right)=1 $ for all $x\in \mathbb{R}$. Since $\beta \neq 1$, we obtain $\chi=1$. By the same way we show that the only non-zero multiplicative function $\chi$ satisfying $\chi(\beta^2 x)=\chi(x)$ for all $x\in \mathbb{R}$ is $\chi=1$ because $\beta \neq \pm 1$. So the special sine addition law \eqref{spsine} becomes 
\[\phi (x+y)=\phi(x)+\phi(y),\quad x,y\in \mathbb{R}.\]
That is $\phi$ additive. In addition if $\phi (\beta x)=\phi (x)$ for all $x\in \mathbb{R}$, then $\phi =0$ since $\beta \neq 1$.\par 
The solutions $f,g:S\rightarrow\mathbb{C}$ of Eq. \eqref{ex1} are the following:\\
1) $f=0$ and $g=0$.\\
2) $f=\dfrac{\alpha}{1+\alpha^2}$ and $g=\dfrac{1}{1+\alpha^2}$, where $\alpha \in \mathbb{C}\backslash \lbrace i, -i\rbrace$.\\
3) $f=0$ and $g=1$.\par 
The solutions $f,g:S\rightarrow\mathbb{C}$ of Eq. \eqref{ex2} can be listed as follows:\\
1) $f=0$ and $g$ is arbitrary.\\
2) $f=\dfrac{1}{2\alpha}$ and $g=\dfrac{1}{2}$, where $\alpha \in \mathbb{C}\backslash \lbrace 0\rbrace$.\\
\end{App}
\begin{App}
Let $G$ be the $(ax+b)$--group defined by \newline
\[G:=\left\lbrace \left(\begin{matrix}
   a & b  \\
   0 & 1  \\
\end{matrix} 
 \right)\mid a>0,\quad b\in \mathbb{R}  \right\rbrace ,\]
and let $X=\left(\begin{matrix}
   a & b  \\
   0 & 1  \\
\end{matrix} 
 \right)$ for all $a,b\in \mathbb{R}$ such that $a>0$. We consider the following automorphism on $G$ 
 \[\sigma \left(X \right)=\left(\begin{matrix}
   a & 2023b  \\
   0 & 1  \\
\end{matrix} 
 \right).\]
 So $\sigma$ is not involutive. According to \cite[Example 2.10, Example 3.13]{ST1}, the continuous additive and the non-zero multiplicative functions on $G$ have respectively the forms
 \[A_c\left(X \right)= c\log(a) ,\]
 and 
 \[\chi_{\lambda}\left(X \right)=a^{\lambda},\]
 where $c, \lambda \in \mathbb{C}$. We can see that $\chi_{\lambda}\circ \sigma =\chi_{\lambda}$ and $A_c\circ \sigma =A_c$, and it is well known that the non-zero continuous solution $\phi$ of \eqref{spsine} on the group $G$ will be of the form $\phi =\chi_{\lambda} A_c$.\par
 The non-zero solutions $f,g\in C(G)$ of Eq. \eqref{E3} are the following:\\
1) $f \left(X\right)=\dfrac{\alpha a^{\lambda}}{1+\alpha^2}$ and $g \left(X \right)=\dfrac{a^{\lambda}}{1+\alpha^2}$, where $\alpha \in \mathbb{C}\backslash \left\lbrace 0, i, -i\right\rbrace $ and $\lambda \in \mathbb{C}$.\\
 2) $f \left(X\right)=-ica^{\lambda}\log(a)$ and $g \left(X\right)= a^{\lambda}\pm ca^{\lambda}\log(a)$, 
 where $c \in \mathbb{C}\backslash \left\lbrace 0\right\rbrace $ and $\lambda \in \mathbb{C}$.\par 
 The non-zero solutions $f,g\in C(G)$ of Eq. \eqref{E1} are the following:\\
 1) $f \left(X\right)= \dfrac{a^{\lambda}}{2\alpha}$ and $g \left(X\right)= \dfrac{a^{\lambda}}{2}$,  where $\alpha \in \mathbb{C}\backslash \left\lbrace 0\right\rbrace $ and $\lambda \in \mathbb{C}$.\\
 2) $f \left(X\right)= ca^{\lambda}\log(a)$ and $g \left(X \right)= a^{\lambda}$, where $c \in \mathbb{C}\backslash \left\lbrace 0\right\rbrace $ and $\lambda \in \mathbb{C}$.
\end{App}
\subsection*{Declarations}
\textbf{Ethical Approval} Not Applicable.\\
\\
\textbf{Conmpeting interests} None.\\
\\
\textbf{Author contributions} The authors confirm contribution to the paper as follows: study conception and design: Y. Aserrar, E. Elqorachi; data collection: Y. Aserrar; analysis and interpretation of results: Y. Aserrar, E. Elqorachi; draft manuscript preparation: Y. Aserrar. All authors reviewed the results and approved the final version of the manuscript.\\
\\
\textbf{Funding} None.\\
\\
\textbf{Availability of data and materials} Not applicable.


\begin{thebibliography}{1}
\bibitem{Acz1} Aczél, J., \textit{ Lectures on Functional Equations and their Applications, Academic Press (New York, 1966).}
\bibitem{Acz2} Aczél, J., Dhombres, J., \textit{ Functional Equations in Several Variables. With Applications to Mathematics, Information Theory and to the Natural and Social Sciences, Encyclopedia of Mathematics and its Applications, vol. 31, Cambridge
University Press (Cambridge, 1989).}
\bibitem{Ajb1} Ajebbar, O., Elqorachi, E., \textit{The Cosine-Sine functional equation on a semigroup with an involutive automorphism. Aequat Math, 91, 1115--1146 (2017).}
\bibitem{Ajb2} Ajebbar, O., Elqorachi, E., \textit{Solutions and stability of trigonometric functional equations on an amenable group with an involutive automorphism. Commun Korean Math, 34(1), 55–82 (2019). https://doi.org/10.4134/CKMS.C170487}
\bibitem{Ase1} Aserrar, Y., Elqorachi, E., \textit{A d'Alembert type functional equation on semigroups. 
https://doi.org/10.48550/arXiv.2210.09111}
\bibitem{Ase2} Aserrar, Y., Elqorachi, E., \textit{Five trigonometric addition laws on semigroups. https://doi.org/10.48550/arXiv.2210.06181 }
\bibitem{Ch} Chung, J.K, Kannappan, Pl., Ng, C.T., \textit{A generalization of the Cosine-Sine functional equation on groups. Linear Algebra Appl, 66, 259--277 (1985).}
\bibitem{EB1}  Ebanks, B., \textit{The cosine and sine addition and subtraction formulas on semigroups. Acta Math. Hungar. 165, 337–354 (2021). https://doi.org/10.1007/s10474-021-01167-1}
\bibitem{EB2}  Ebanks, B., \textit{Around the Sine Addition Law and
d’Alembert’s Equation on Semigroups.
Results Math.  77, 11 (2022). https://doi.org/10.1007/s00025-021-01548-6}
\bibitem{K} Kannappan, Pl., \textit{Functional equations and inequalities with applications, Springer monographs in Mathematics, Springer, New York (2009) https://doi.org/10.1007/987-0-387-89492-8}
\bibitem{Pou} Poulsen, T.A., Stetk\ae r, H., \textit{On the trigonometric subtraction and addition formulas. Aequat Math. 59(1-2), 84-92 (2000).}
\bibitem{ST1} Stetk\ae r, H., \textit{Functional equations on groups.
World scientific Publishing CO, Singapore (2013).}
\bibitem{V} Vincze, E., \textit{Eine allgemeinere Methode in der Theorie der Funktionalgleichungen II. Publ. Math. Debrecen, 9, 314-323 (1962).}
\end{thebibliography}
\end{document}